
\documentclass[12pt, reqno]{article}
\usepackage{amsmath, amsthm, amscd, amsfonts, amssymb, graphicx, color}
\usepackage[bookmarksnumbered,  plainpages]{hyperref}
\usepackage{setspace,lineno}

\title{Maps preserving ascent/descent of triple Jordan product}

\author{Roja Hosseinzadeh$^{1}$ and Tatjana Petek$^{2,3}$\footnote{Corresponding author, email: tatjana.petek@um.si} \\
	$^1$	{\it Department of Mathematics, Faculty of Mathematical Sciences,} \\
	{\it University of Mazandaran, P.O.Box 47416-1468, Babolsar, Iran}\\
	$^2${\it Faculty of Electrical Engineering and Computer Science,} \\ {\it University of Maribor,
		Koro\v ska cesta 46, SI-2000 Maribor, Slovenia}\\
	$^3${\it Institute of Mathematics, Physics and Mechanics,}\\
	{\it Jadranska 19, SI-1000 Ljubljana, Slovenia} 
}

\textheight 22.5truecm \textwidth 14.5truecm
\setlength{\oddsidemargin}{0.4in}\setlength{\evensidemargin}{0.4in}

\setlength{\topmargin}{.5cm}

\newtheorem{theorem}{Theorem}[section]
\newtheorem{lemma}[theorem]{Lemma}
\newtheorem{proposition}[theorem]{Proposition}
\newtheorem{corollary}[theorem]{Corollary}

\theoremstyle{definition}

\newtheorem{example}[theorem]{Example}

\theoremstyle{remark}
\newtheorem{remark}[theorem]{Remark}
\numberwithin{equation}{section}

\newcommand{\F}{\mathbb{F}}

\newcommand{\bx}{\mathcal{B(X)}}
\newcommand{\rk}{\mathrm{rank}\,}

\begin{document}
	\maketitle
	
	Mathematical Subject Classification 2010: 47B49, 15A24
	
	Keywords: Preservers, Ascent,  Descent, Bounded operator
	
	\begin{abstract}
		Let $\mathcal{X}$ be a real or complex Banach space with $ \dim \mathcal{X}\geq 3$. We give a complete description of surjective mappings on $\mathcal{B(X)}$ that preserve the ascent of Jordan triple product of operators or, preserve the descent of Jordan triple product of operators.
	\end{abstract}

	\section{Introduction}
	
	Preserving problems in operator theory in recent decades are a topic of interest to many mathematicians. In these problems, the authors are interested
	in describing mappings defined on operator algebras 
	which leave some functions, subsets, relations, etc. fixed. Many results of
	this type can be found in papers \cite{3}, \cite{4}, \cite{10}, \cite{11} and references therein.
	
	Let $\bx$ denote the algebra of all bounded linear operators on a complex or real Banach space $\mathcal{X}$ and let $\mathbb{F}$ denote the field of scalars of $\mathcal{X}$ with $\mathbb{F}^\ast=\mathbb{F}\setminus \!\left\{0\right\}$. For an operator $T \in \mathcal{B(X)}$, the ascent $\alpha (T)$ and the descent $ \delta (T)$ are given by
	
	$$\alpha (T)=\inf \{ n \in \mathbb{N}\cup \{0\}:~ \ker (T^n)= \ker (T^{n+1})\},$$
	$$ \delta (T)=\inf \{ n \in \mathbb{N}\cup \{0\}:~ \mathrm{range} (T^n)= \mathrm{range} (T^{n+1})\}.$$
	We set $\alpha(T)=\infty$ and $\delta(T)=\infty$, respectively, when the relevant infimum does not exist. The indices $\alpha (T)$ and $ \delta (T)$ were introduced by Riesz \cite{12}. These notions have been used as tools in the study of several spectral properties of some classes of linear operators in Banach spaces, see for instance \cite{7a} and the references therein. To learn basic facts about these values see \cite{1}, \cite{7} and \cite{8}.

	Authors in \cite{2} proved that a surjective additive map $\varphi: \mathcal{B(X)} \rightarrow \mathcal{B(Y)}$ where $\mathcal{X}$  and $\mathcal{Y}$ are infinite-dimensional Banach spaces, preserves the ascent of operators if and only if there exist a nonzero complex number $\lambda$ and an invertible bounded linear or conjugate linear operator $A: \mathcal{X} \rightarrow \mathcal{Y}$ such that $\varphi (T) = \lambda ATA^{-1}$ for all $T \in \mathcal{B(X)}$.
	Moreover, they obtained a similar result for additive maps from $\bx$ onto $\mathcal{B(Y)}$ preserving the descent of operators.
	
	In the field of so-called preserver problems, some researchers consider the assumption of preserving a specific property of a fixed product of operators instead of the assumption of preserving the specific property of operators. In this type of preserver problem, the linearity of the mapping is usually removed. In this regard, in \cite{5} we characterized  maps on $ \mathcal{B(X)}$ which preserve the ascent of product of operators or, they preserve the descent of product of operators. In fact, in \cite{5} it is proved that $\phi: \mathcal{B(X)} \rightarrow \mathcal{B(X)}$  is surjective map with $\phi(I)$ surjective and $\alpha(AB)=\alpha(\phi(A)\phi(B))$ (or $\delta(AB)=\delta(\phi(A)\phi(B))$) for every $A,B\in\bx$ (only when  $\mathcal{X}$ is infinite-dimensional) if and only if
	there exist an invertible bounded linear or conjugate-linear operator $T: \mathcal{X} \rightarrow \mathcal{X}$ and a function $k:\mathcal{B(X)} \rightarrow \mathbb{F}^*$ such that $\phi (A) = k(A)TAT^{-1}$ for all $A \in \mathcal{B(X)}$.
	Also, in the case where the dimension of $\mathcal{X}$ is finite and greater than or equal to $3$, a similar description was obtained, but here without the condition "$\phi(I)$ is surjective/injective". It turned out that both problems are connected with preservers of the rank-one nilpotency of the product.
	
	In this paper, we are going to study a similar problem for maps preserving the ascent of the triple Jordan product or, alternatively and simultaneously, preserving the descent of the triple Jordan product of operators. It turns out that we have to alter the proof working for the usual product quite a lot in order to serve our needs. Below is our main result.

	\medskip

	\begin{theorem}\label{th:1} Let $\mathcal{X}$ be at least three-dimensional Banach space over the field $\F$, being either the field of complex or the field of all real numbers.  Let $\phi:\bx \to \bx$ be a surjective map satisfying the condition
		\begin{equation}
			\alpha(ABA)=\alpha(\phi(A)\phi(B)\phi(A)),\ \ \ \text{for every}\ \ A,B\in\bx,
		\end{equation}
		or,
		\begin{equation}
			\delta(ABA)=\delta(\phi(A)\phi(B)\phi(A)),\ \ \ \text{for every}\ \ A,B\in\bx.
		\end{equation}
		If $\mathcal{X}$ is infinite-dimensional space, then  either there exists an invertible bounded linear or conjugate-linear operator $A: \mathcal{X}\to \mathcal{X}$ and a map  $\lambda :\bx \to \F^\ast$,    such that
		\[\phi(T)=\lambda(T) ATA^{-1},\ \ \ T\in  \bx, \]
		or $\mathcal{X}$ is reflexive and,
		there exists an invertible bounded linear or conjugate-linear operator $A: \mathcal{X}^\ast\to \mathcal{X}$ and a  map  $\lambda :\bx \to \F^\ast$ such that
		\[\phi(T)=\lambda(T) AT^\prime A^{-1},\ \ \ T\in  \bx. \]
		When $\mathcal{X}$ is of finite dimension $n\ge 3$, we identify operators with n-by-n matrices. Then  there exists a nonsingular matrix $A$, a field automorphism $\tau$ of $ \F$ and a map   $\lambda :M_n(\F) \to \F^\ast$ such that $\phi$ is either of the form
		\begin{equation}\label{eq:1000}
			\phi([t_{ij}]) = \lambda(T) A[\tau(t_{ij})]A^{-1},\ \ \ [t_{ij}]\in M_n(\F),
		\end{equation}
		or, it  is of the form
		\begin{equation}\label{eq:2000}
			\phi([t_{ij}]) = \lambda(T) A[\tau(t_{ij})]^{\mathrm{tr}}A^{-1},\ \ \ [t_{ij}]\in M_n(\F),
		\end{equation}
		where $.^\mathrm{tr}$ denotes the transposition.
	\end{theorem}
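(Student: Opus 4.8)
The strategy is to reduce the triple-Jordan-product problem to the ordinary-product problem already solved in \cite{5}, and then to invoke the characterizations of additive ascent/descent preservers from \cite{2} (or their finite-dimensional matrix analogues). The first task is to extract structural information from the functional equation alone. Taking $B=A$ gives $\alpha(A^3)=\alpha(\phi(A)^3)$ (resp.\ for $\delta$), and taking $A=I$ gives $\alpha(B)=\alpha(\phi(I)\phi(B)\phi(I))$ for all $B$; combined with surjectivity this forces $\phi(I)$ to be injective in the ascent case and surjective in the descent case, hence (after a harmless normalization dividing by a scalar coming from $\phi(I)$, which is absorbed into $\lambda$) one may assume $\phi(I)=I$ up to the similarity/anti-isomorphism that will appear at the end. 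The key elementary observation is that a rank-one operator $R$ satisfies $\alpha(R)\le 1$, and $\alpha(R)=0$ exactly when $R$ is a rank-one idempotent (up to scalar), while $\alpha(R)=1$ when $R$ is a nonzero rank-one nilpotent; the analogous dichotomy holds for $\delta$. Thus the condition "$ABA$ is a rank-one nilpotent" is detectable purely in terms of ascent (it is equivalent to $\alpha(ABA)\ge 1$ together with $\alpha((ABA)^2)=\alpha(ABA)$, i.e.\ $(ABA)^2=0$ but $ABA\ne 0$, once one knows $ABA$ has rank at most one — and the rank-one-ness itself must be pinned down). This is the technical heart: I would prove a lemma saying that $\phi$ preserves, in both directions, the set of pairs $(A,B)$ for which $ABA$ is a nonzero rank-one nilpotent, and more generally that $\phi$ and $\phi^{-1}$ carry rank-one operators to rank-one operators.

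Once rank-one preservation is in hand, I would follow the template of \cite{5}: fix a rank-one nilpotent $N=x\otimes f$ with $f(x)=0$ and analyze the sets $\{A: ANA \text{ is rank-one nilpotent or } 0\}$ to recover the lattice of one-dimensional subspaces (ranges of rank-one operators) and the lattice of hyperplanes (kernels), using the three-dimensionality of $\mathcal X$ to guarantee enough nilpotents. Concretely, two rank-one operators $x\otimes f$ and $y\otimes g$ have the same range iff a suitable family of triple products with them behaves identically; this lets one define induced maps on $\mathbb P(\mathcal X)$ and on $\mathbb P(\mathcal X^\ast)$. The main structural dichotomy — linear-type (\ref{eq:1000}) versus transpose-type (\ref{eq:2000}), and in the infinite-dimensional case the operator $A:\mathcal X\to\mathcal X$ versus $A:\mathcal X^\ast\to\mathcal X$ with reflexivity forced in the latter — emerges exactly as in \cite{2,5}: the induced map on projective space is either a collineation or a correlation, and by the fundamental theorem of projective geometry it comes from a semilinear bijection $\mathcal X\to\mathcal X$ or $\mathcal X^\ast\to\mathcal X$, carrying a field automorphism $\tau$ which is forced to be the identity (in the complex case, continuity forces it to be identity or conjugation) when $\mathcal X$ is a genuine topological vector space over $\mathbb R$ or $\mathbb C$.

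The final step is to upgrade the projective-geometric conclusion to the operator-level formula and to check the scalar bookkeeping. Having $\phi(x\otimes f)=\lambda\cdot A x\otimes (A^{-\ast}f)\circ\tau$ (or the transpose version) for all rank-one operators, one extends to all of $\bx$ by writing an arbitrary $T$ in terms of its action on rank-one operators via triple products $A(x\otimes f)A$ and using that $\alpha$ (resp.\ $\delta$) of such products detects $Tx$ up to scalar and $f\circ T$ up to scalar; this pins down $\phi(T)$ up to a scalar $\lambda(T)\in\F^\ast$, which is exactly the claimed form. The descent case is handled by the dual/transpose argument, or more economically by passing to adjoints: $\delta(ABA)=\alpha((ABA)^\ast)=\alpha(A^\ast B^\ast A^\ast)$ on the dual space, so the descent statement follows from the ascent statement applied to $T\mapsto\phi(T^\ast)^\ast$, with the reflexivity hypothesis ensuring one stays within $\bx$ rather than $\mathcal B(\mathcal X^\ast)$.

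**Main obstacle.** The hard part is the rank-one preservation lemma. Unlike the ordinary product $AB$, where $A$ and $B$ play independent roles, in $ABA$ the outer factor is repeated, so the "test operators" one is allowed to plug in are far less flexible — in particular one cannot freely separate the contribution of the range of the left factor from that of the kernel of the right factor. I expect that recovering rank-one nilpotency of $ABA$ from ascent/descent data alone, and then bootstrapping to show $\phi$ preserves all rank-one operators, will require a delicate case analysis exploiting $\dim\mathcal X\ge 3$ and carefully chosen $2$- and $3$-dimensional invariant subspaces, which is presumably why the authors remark that "we have to alter the proof working for the usual product quite a lot."
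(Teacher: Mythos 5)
Your high-level skeleton (detect rank-one and rank-one nilpotent operators via ascent/descent of triple products, recover a projective structure on rank-one nilpotents, apply a fundamental-theorem-of-projective-geometry type result, then extend to all operators) is indeed the route the paper takes, but the proposal has several concrete gaps and errors. First, your basic detection facts are wrong: a nonzero rank-one idempotent has ascent $1$ (not $0$; it is never injective when $\dim\mathcal{X}\ge 2$) and a rank-one nilpotent has ascent $2$ (not $1$), so the dichotomy you build on must be redone; the correct statement, used in the paper, is that for a rank-at-most-one operator $N$, $N\in\mathcal{N}_1(\mathcal{X})$ iff $\alpha(N)=2$ iff $\delta(N)=2$. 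Second, the normalization ``one may assume $\phi(I)=I$'' is unjustified at that stage: knowing $\alpha(\phi(I)^3)=0$ only gives injectivity of $\phi(I)$, and the fact that $\phi(I)$ is a scalar multiple of the identity is part of what must be proved, not assumed. Third, and most seriously, your extension step fails as described. After normalizing $\phi$ to be the identity on rank-one nilpotents, the only usable test data are triple products $NTN=g(Tx)N$ with $N=x\otimes g\in\mathcal{N}_1(\mathcal{X})$ (for non-nilpotent rank-one $N$ the ascent/descent of $NTN$ carries no information), and this determines $\phi(T)$ only up to an affine combination $u_TI+v_TT$, not ``up to a scalar $\lambda(T)$'' as you claim. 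Eliminating the summand $u_TI$ for non-scalar $T$ is the hardest part of the paper: it requires constructing, for every algebraic operator $A$ of degree two and rank greater than one and every $w\neq 0$, explicit test operators (the $3\times3$ and $4\times4$ matrices of Lemmas 2.2--2.5, assembled in Lemma 2.6) for which $A$ and $wI+A$ produce different ascents and descents of the triple product. The same Lemma 2.6 also underpins the characterizations of $0$ and of rank-one operators (your ``rank-one preservation lemma,'' which you only assert), because the problematic case there is precisely the algebraic degree-two operators. Finally, your shortcut for the descent case via $\delta(ABA)=\alpha((ABA)^{\ast})$ is not valid in a general Banach space (one only has $\alpha(T')\le\delta(T)$ without closed-range or reflexivity hypotheses); the paper avoids this by arranging every counterexample construction to control ascent and descent simultaneously. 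Also note that in the finite-dimensional case the automorphism $\tau$ is not forced to be the identity or conjugation; the theorem deliberately allows arbitrary (possibly discontinuous) field automorphisms.
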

	\begin{remark}
		It is well known that the only automorphism of $\mathbb{R}$ is the identity, while there are many "wild" automorphisms of $\mathbb{C}$. However, the only continuous automorphisms of $\mathbb{C}$ are the identity and the complex conjugation.
	\end{remark}
	We provide examples showing that the assumption $n\ge 3$ and surjectivity in the infinite-dimensional case are indispensable.
	\begin{example}
		Let $\mathcal{X}$ be an infinite dimensional Banach space. Then as it is well known, it is isomorphic to $\mathcal{X}\oplus \mathcal{X}$. Let $T:\mathcal{X}\oplus \mathcal{X}\to \mathcal{X}$ be such an isomorphism. Note that every operator in $\mathcal{
			B}(\mathcal{X}\oplus \mathcal{X})$ can be represented by a two-by-two operator matrix.  Then the map $\phi:\mathcal{
			B}(\mathcal{X}\oplus \mathcal{X})\to \mathcal{
			B}(\mathcal{X}\oplus \mathcal{X})$, defined by
		$$\phi(A) = \begin{bmatrix}
			TAT^{-1} & 0 \\
			0 & I
		\end{bmatrix},\ \ \ A\in\mathcal{
			B}(\mathcal{X}\oplus \mathcal{X}),$$
		is multiplicative, obviously has the property that $\alpha(A)=\alpha(\phi(A))$ and $\delta(A)=\delta(\phi(A))$ for every $A$ and so, it preserves the ascent and the descent of Jordan triple products.
	\end{example}

	Our paper is organized as follows. In section Preliminaries we give the notation and several technical lemmas further applied in the proof. The proof itself is given in the last section.
	\section{Preliminaries}
	We start with the notations.
	The symbol $\mathcal{X}$ stands for a complex or real Banach space (over $\mathbb{F}$ correspondingly) of dimension at least three and $\bx$ denotes the algebra of all bounded linear operators on $\mathcal{X}$.  The dual space of $\mathcal{X}$ is denoted by $\mathcal{X}^ *$ and $A^{\prime}$ is the adjoint operator of $A \in \bx$.  By a functional we always mean a bounded functional.  Both, the identity operator on $\mathcal{X}$ and the identity matrix in $M_n(\F)$, the algebra of all $n\times n$ matrices, will be denoted by $I$.
	For every nonzero $x\in \mathcal{X}$ and nonzero
	$f\in \mathcal{X}^ *$, the symbol $x\otimes f$ stands for the rank-one
	linear operator on $\mathcal{X}$ defined by $(x\otimes f)y=f(y)x$ for any
	$y\in \mathcal{X}$. Note that every rank-one operator in $\bx$ can be
	written in this way. The operator $x\otimes f \neq 0$ is
	idempotent if and only if $f(x)=1$ and is nilpotent if and only if $f(x)=0$. We denote by  $\mathcal{P}_1(\mathcal{X}) $ and $\mathcal{N}_1(\mathcal{X})$  the
	set of all rank-one idempotent operators and the set of all rank-one nilpotent operators in $\bx$, respectively. For a subset $M\subseteq \mathcal{X}$ we denote $M^\circ =\{f\in\mathcal{X}^\ast; f(m)=0, \; m\in M\}$. By $E_{ij}\in M_n(\F)$ we denote the standard basis matrix in $M_n(\F)$ having $1$ in the $(i,j)$-position and zeros elsewhere and by $\mathrm{diag}(a_1,a_2,\dots,a_n)$ we mean a diagonal matrix with the given diagonal entries. 
	
	Listing the basic properties of the ascent and descent, which can be easily verified and the proof will therefore be omitted,  are in order. 
	
	\begin{lemma}\label{lem: basic}
		For every operator $A\in \bx$ the following assertions hold.
		\begin{enumerate}
			\item If $A$ is injective if and only if $\alpha(A)=0$.
			\item If $A$ is surjective if and only if $\delta(A)=0$.
			\item If $A$ is nilpotent of nilindex $k$, then $\alpha(A)=\delta(A)=k$.
			\item If $A^2=A$ and $A\neq I$, we have $\alpha(A)=\delta(A)=1$; in particular, $\alpha(0)=\delta(0)=1$.
			\item If $A$ is algebraic with the minimal polynomial $m_A(\lambda)=\lambda^2(\lambda -\lambda_0)$, $\lambda_0 \ne 0$, then  $\alpha(A)=\delta(A)=2$.
		\end{enumerate}
		
	\end{lemma}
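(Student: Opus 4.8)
The final statement to prove is Lemma \ref{lem: basic}, which collects five elementary facts about the ascent $\alpha$ and descent $\delta$. Each item is a direct unwinding of the definitions, so the plan is to verify each assertion separately using the monotonicity of the kernel and range chains. The key standing fact throughout is that for any $A\in\bx$ the kernels form an increasing chain $\ker(A^0)\subseteq\ker(A)\subseteq\ker(A^2)\subseteq\cdots$ and the ranges a decreasing chain $\mathrm{range}(A^0)\supseteq\mathrm{range}(A)\supseteq\mathrm{range}(A^2)\supseteq\cdots$, together with the standard stabilization principle: once two consecutive kernels (resp.\ ranges) coincide, $\ker(A^n)=\ker(A^{n+1})$ forces $\ker(A^{n+1})=\ker(A^{n+2})$, so the chain is constant thereafter; the analogous statement holds for ranges.

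For items (1) and (2) I would simply observe that $A^0=I$, so $\ker(A^0)=\{0\}$ and $\mathrm{range}(A^0)=\mathcal{X}$. Then $\alpha(A)=0$ means $\ker(A^0)=\ker(A^1)$, i.e.\ $\ker(A)=\{0\}$, which is exactly injectivity; likewise $\delta(A)=0$ means $\mathrm{range}(A)=\mathrm{range}(A^0)=\mathcal{X}$, i.e.\ surjectivity. For item (3), if $A$ is nilpotent of nilindex $k$ then $A^k=0$ and $A^{k-1}\neq 0$. For the ascent, $\ker(A^k)=\mathcal{X}$ while $\ker(A^{k-1})\subsetneq\mathcal{X}$ (since $A^{k-1}\neq 0$), so the kernel chain first stabilizes exactly at step $k$, giving $\alpha(A)=k$; the descent argument is dual, using $\mathrm{range}(A^k)=\{0\}$ and $\mathrm{range}(A^{k-1})\neq\{0\}$.

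For item (4), suppose $A^2=A$ with $A\neq I$. Then $A^n=A$ for all $n\geq 1$, so the kernel and range chains are constant from step $1$ onward, giving $\alpha(A),\delta(A)\leq 1$. It remains to exclude the value $0$: by items (1)--(2), $\alpha(A)=0$ would force injectivity and $\delta(A)=0$ would force surjectivity, but an idempotent $A\neq I$ is neither injective (its kernel is the nontrivial range of $I-A$) nor surjective (its range is a proper subspace), whence $\alpha(A)=\delta(A)=1$; the case $A=0$ is the instance $A^2=A$ with zero idempotent. For item (5), with minimal polynomial $m_A(\lambda)=\lambda^2(\lambda-\lambda_0)$, I would use the primary decomposition $\mathcal{X}=\ker(A^2)\oplus\ker(A-\lambda_0 I)$, on which $A$ acts as a nilpotent part of index exactly $2$ (because $\lambda^2$ is a factor but $\lambda$ alone is not) and as an invertible part (scaling by $\lambda_0\neq 0$ up to the structure on that summand). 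On the invertible summand the kernel and range chains are already stable, so the index $2$ contributed by the nilpotent summand governs both $\alpha$ and $\delta$; since $A^2$ and $A^3$ agree on the first summand (both kill it) and on the second (both are invertible there) while $A$ and $A^2$ differ on the nilpotent summand, stabilization occurs exactly at step $2$. The main subtlety is item (5): one must carefully check that the nilpotent summand genuinely has index $2$ and not $1$ (guaranteed by $\lambda^2\mid m_A$) and that the decomposition is $A$-invariant so the two chains decouple; everything else is immediate from the definitions and the stabilization principle.
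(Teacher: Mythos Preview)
Your proof is correct; the paper itself omits the proof of this lemma entirely, stating only that the properties ``can be easily verified and the proof will therefore be omitted.'' Your argument fills in exactly the routine verifications one would expect, and the one nontrivial point (the primary decomposition in item~(5), including the check that the nilpotent summand has index exactly~$2$) is handled properly.
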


	We next present several pairs of matrices regarding their ascent/descent/Jordan-triple product properties which will be further applied in the proof.
	For finding particular matrices in the following lemmas, we used Wolfram Mathematica tools which application we declare in Aknowledgement at the end of the paper.
	
	\begin{lemma}\label{lem:list}
		Suppose $a,b\in \F\setminus \{1\}$ and $a\ne b$. Then the  matrices 
		\begin{equation}\label{eq:3}
			C_a= \begin{bmatrix}
				1 & 0 & 0 \\
				0 & 1 & 0\\
				0 & 0 & a 	
			\end{bmatrix}, \   C_b= \begin{bmatrix}
				1 & 0 & 0 \\
				0 & 1 & 0\\
				0 & 0 & b 	
			\end{bmatrix}, \  T_a=\begin{bmatrix}
				1 & 1 & 1 \\
				\frac{1}{a-1}	& 0 & 0 \\
				\frac{-1}{a-1} & 0 & 0
			\end{bmatrix}
		\end{equation}
		satisfy $\alpha(T_aC_aT_a)=\delta(T_aC_aT_a)=3$,  $\alpha(T_aC_bT_a)=\delta(T_aC_bT_a)=2$ and $T_a$ is algebraic of order three. 
	\end{lemma}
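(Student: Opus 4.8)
The plan is to verify the three claimed assertions by direct computation of the relevant powers, exploiting the special structure of $T_a$. First I would observe that $T_a$ has a large kernel: its last two rows are scalar multiples of each other (up to sign), so $\rk T_a\le 2$, and in fact $\rk T_a = 2$ since $T_a\ne 0$ and the first two rows together with the $(2,1),(3,1)$ entries force rank exactly $2$; moreover $T_a^2$ can be computed explicitly. The key point is that $C_a$ acts as the identity on the span of the first two coordinate vectors $e_1,e_2$, whereas on $e_3$ it multiplies by $a$. Since the columns of $T_a$ all lie in $\mathrm{span}\{e_1,e_2,e_3\}$ but the image of $T_a$ is $\mathrm{span}\{(1,\tfrac1{a-1},\tfrac{-1}{a-1})^{\mathrm{tr}},\,(1,0,0)^{\mathrm{tr}}\}$, I would track how $C_a$ versus $C_b$ distorts this image under repeated application of $T_a(\cdot)T_a$.

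Concretely I would compute $M:=T_aC_aT_a$ and $N:=T_aC_bT_a$ as explicit $3\times 3$ matrices, then compute their squares and cubes. For the ascent/descent claims, by Lemma \ref{lem: basic} (parts 3–5) it suffices to identify the minimal polynomial, or equivalently to locate the stabilization of the kernel/range chain. I expect $M=T_aC_aT_a$ to turn out nilpotent of nilindex $3$ — this would immediately give $\alpha(M)=\delta(M)=3$ via Lemma \ref{lem: basic}(3) — which is plausible because $C_a$ is chosen (via the entries $\pm\tfrac1{a-1}$) precisely so that the ``$a$-eigendirection'' interacts with the kernel of $T_a$ to produce a single Jordan block. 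For $N=T_aC_bT_a$ with $b\ne a$ (and $b\ne1$), I expect the minimal polynomial to have the form $\lambda^2(\lambda-\mu)$ for some nonzero $\mu$ depending on $a,b$, so that Lemma \ref{lem: basic}(5) yields $\alpha(N)=\delta(N)=2$; alternatively one checks directly that $\ker N\subsetneq\ker N^2=\ker N^3$ and $\mathrm{range}\,N\supsetneq\mathrm{range}\,N^2=\mathrm{range}\,N^3$. Finally, that $T_a$ is algebraic of order three means its minimal polynomial has degree $3$; since $T_a$ is $3\times3$ this is equivalent to the minimal polynomial coinciding with the characteristic polynomial, i.e. $T_a$ being nonderogatory, which I would verify by exhibiting a cyclic vector or by checking that $T_a$ and $T_a^2$ together with $I$ are linearly independent.

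The main obstacle is purely computational bookkeeping: the entries $\tfrac1{a-1}$ and $\tfrac{-1}{a-1}$ make the squares and cubes of $M$ and $N$ messy rational expressions in $a$ (and $b$), and one must be careful that the simplifications genuinely use the hypotheses $a,b\ne1$ and $a\ne b$ — these nondegeneracy conditions are exactly what prevents the nilindex or the degree of the minimal polynomial from dropping. I would organize the computation by first reducing $C_a$ to $I + (a-1)E_{33}$, so that $T_aC_aT_a = T_a^2 + (a-1)\,T_aE_{33}T_a$, and $T_aE_{33}T_a = (T_ae_3)\otimes(e_3^{\mathrm{tr}}T_a)$ is a rank-one operator whose single nonzero row and column are read off directly from $T_a$; an analogous splitting applies to $N$. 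This rank-one perturbation viewpoint should keep the algebra manageable and make transparent why the choice of the constants in $T_a$ forces the stated ascent/descent values. Since the statement explicitly records that these matrices were found with computer algebra, I would in the write-up simply present the resulting $M$, $N$, and their powers, and let the reader (or referee) confirm the entries.
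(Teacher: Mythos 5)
Your proposal is correct and follows essentially the same route as the paper: verify that the minimal polynomial of $T_a$ is $\lambda^2(\lambda-1)$, that $T_aC_aT_a$ is nilpotent of nilindex three, and that $T_aC_bT_a$ has minimal polynomial $\lambda^2\bigl(\lambda-\tfrac{a-b}{a-1}\bigr)$ (nonzero root precisely because $a\neq b$ and $b\neq 1$), then invoke parts (3) and (5) of Lemma \ref{lem: basic}. Your rank-one perturbation bookkeeping, $T_aC_aT_a=T_a^2+(a-1)(T_ae_3)(e_3^{\mathrm{tr}}T_a)$, is just a tidy way of carrying out the same direct computation the paper performs.
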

	
	\begin{proof}
		First of all, observe that the minimal polynomial of $T_a$ is equal to $\lambda^2(\lambda -1)$ validating the last statement. By a direct computation we get that $T_aC_aT_a$ is nilpotent of nilindex three and by the property (3) in Lemma \ref{lem: basic} the ascent and the descent are equal to three.  The matrix $T_aC_aT_a$ has the minimal polynomial $\lambda^2(\lambda-\frac{a-b}{a-1})$ and so, by (5) in Lemma \ref{lem: basic}, both the ascent and the descent of $T_aC_aT_a$ are equal to $2$.
	\end{proof}
	\begin{lemma}\label{lem:12}
		Let $u,v\in \F^\ast$, $u\neq v$.
		\begin{equation}\label{eq:4}
			A_0= \begin{pmatrix}
				1 & u & 0 \\
				0 & 1 & 0\\
				0 & 0 & 1 	
			\end{pmatrix}, \ \ \  B_0= \begin{pmatrix}
				1 & v & 0 \\
				0 & 1 & 0\\
				0 & 0 & 1 	
			\end{pmatrix}, \ \  	T=\begin{bmatrix}
				-2 u & 0 & 0 \\
				1 & 0 & 0 \\
				\frac{1}{2 (v-u)}	 & 1 & 0
			\end{bmatrix}.
		\end{equation}
		Then   $\alpha(A_0TA_0)=\delta(A_0TA_0) =3$, $\alpha(B_0TB_0)= \delta(B_0TB_0)=2$ and $T$ is algebraic of order three.
	\end{lemma}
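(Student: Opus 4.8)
The plan is to follow the strategy of Lemma \ref{lem:list}: reduce each Jordan triple product to an ordinary product of matrices by a similarity, and then read off the ascent and descent from Lemma \ref{lem: basic}. Since $A_0$ is invertible, $A_0TA_0 = A_0(TA_0^2)A_0^{-1}$, so $A_0TA_0$ is similar to $TA_0^2$; likewise $B_0TB_0$ is similar to $TB_0^2$. As ascent and descent are similarity invariants, it suffices to analyse $TA_0^2$ and $TB_0^2$, and both are immediate to write down explicitly, because $A_0^2$ (respectively $B_0^2$) is just $A_0$ (respectively $B_0$) with the off-diagonal entry $u$ (respectively $v$) doubled.

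For the claim that $T$ is algebraic of order three, I would observe that $T$ is lower triangular with diagonal $(-2u,0,0)$, so its characteristic polynomial is $\lambda^2(\lambda+2u)$, and $2u\ne0$ since $u\in\F^\ast$. A direct computation shows $T^2+2uT\ne0$ (its $(3,1)$ entry equals $1$), so the minimal polynomial is exactly $\lambda^2(\lambda+2u)$, which has degree three.

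For $A_0TA_0$, I would compute $TA_0^2$ and verify that it is nilpotent of nilindex three: its square has a nonzero bottom row while its cube vanishes. Lemma \ref{lem: basic}(3) then gives $\alpha(A_0TA_0)=\delta(A_0TA_0)=3$. For $B_0TB_0$, I would compute $TB_0^2$ and find its characteristic polynomial to be $\lambda^2(\lambda-2(v-u))$ with $2(v-u)\ne0$; using the hypothesis $u\ne v$, the two nonzero columns of $TB_0^2$ are linearly independent, so $TB_0^2$ has rank two, the eigenvalue $0$ has geometric multiplicity one, and hence the minimal polynomial coincides with the characteristic polynomial $\lambda^2(\lambda-2(v-u))$. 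Lemma \ref{lem: basic}(5) then yields $\alpha(B_0TB_0)=\delta(B_0TB_0)=2$.

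The only point requiring care is ruling out that the minimal polynomials are proper divisors of the characteristic polynomials: for $TA_0^2$ this is the assertion that its square does not already vanish, for $T$ it is $T^2+2uT\ne0$, and for $TB_0^2$ it is the rank-two computation. These are precisely the places where the hypotheses $u\in\F^\ast$ and $u\ne v$ enter. Everything else reduces to a handful of $3\times3$ matrix multiplications, which is exactly how the matrices $T$, $A_0$, $B_0$ were engineered.
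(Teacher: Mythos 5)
Your proposal is correct and takes essentially the same route as the paper: establish that $T$ has minimal polynomial $\lambda^2(\lambda+2u)$, that the first triple product is nilpotent of nilindex three, and that the second has minimal polynomial $\lambda^2(\lambda-2(v-u))$, then invoke parts (3) and (5) of Lemma \ref{lem: basic}. The only difference is your similarity reduction $A_0TA_0\sim TA_0^2$ and $B_0TB_0\sim TB_0^2$, which merely lightens the arithmetic compared with the paper's direct computation of the triple products, and your explicit checks (the $(3,1)$ entry of $T^2+2uT$, the nonvanishing bottom row of the square, the rank-two argument) correctly rule out smaller minimal polynomials.
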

	\begin{proof}
		Check that the minimal polynomial of $T$ reads $m_T(\lambda)=\lambda^2(\lambda +2u)$ and compute that $(A_0TA_0)^3$ is nilpotent of nilindex three thus by (3) in Lemma \ref{lem: basic}, the ascent and the descent are equal to three. The minimal polynomial of $B_0TB_0$ is $\lambda^2(\lambda -2(v-u))$. Since $u\neq v$, the ascent and the descent are equal $2$ due to (5) in Lemma \ref{lem: basic}.
	\end{proof}
	\begin{lemma}\label{lem:125}
		Let 
		\begin{equation}\label{eq:125}
			A=\begin{bmatrix}
				1&0&0\\
				0&1&0\\
				0&0&0
			\end{bmatrix},\ \ \ B=\begin{bmatrix}
				0&0&0\\
				0&0&0\\
				0&0&-1
			\end{bmatrix}, \ \ \ T=\begin{bmatrix}
				1&1&0\\
				1&0&1\\
				-1&0&0
			\end{bmatrix}.
		\end{equation}
		Then $T$ has minimal polynomial of degree three and  $\alpha(TAT)=\delta(TAT)=1$ and $\alpha(TBT)=\delta(TBT)=2$.
	\end{lemma}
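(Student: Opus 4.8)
The plan is to verify all three assertions by direct matrix computation, exactly in the spirit of the proofs of Lemmas \ref{lem:list} and \ref{lem:12}; no conceptual input beyond Lemma \ref{lem: basic} is needed.

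First I would settle the claim about $T$. Computing the characteristic polynomial of the given $3\times 3$ matrix yields $(\lambda-1)^2(\lambda+1)$, so the only possible minimal polynomial of degree smaller than three is $(\lambda-1)(\lambda+1)=\lambda^2-1$. A one-line check shows $T^2-I\neq 0$, hence the minimal polynomial coincides with the characteristic polynomial and has degree three, as asserted (note that here $T$ is algebraic of order three but \emph{not} of the special shape $\lambda^2(\lambda-\lambda_0)$).

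Next I would compute $TAT$. Since $A=I-E_{33}$, one has $TAT=T^2-TE_{33}T$, and a short multiplication gives an explicit matrix whose characteristic polynomial factors as $\lambda(\lambda-1)(\lambda-2)$. Having three distinct eigenvalues, $TAT$ is similar to $\mathrm{diag}(0,1,2)$, and ascent and descent are similarity invariants; for $\mathrm{diag}(0,1,2)$ the kernel and the range stabilize already at the first power while the matrix is singular, so $\alpha(TAT)=\delta(TAT)=1$. For $TBT$ I would use $B=-E_{33}$, which makes $TBT=-TE_{33}T$ a single nonzero standard matrix unit (namely $E_{21}$); thus $(TBT)^2=0$ but $TBT\neq 0$, so $TBT$ is nilpotent of nilindex two and item (3) of Lemma \ref{lem: basic} gives $\alpha(TBT)=\delta(TBT)=2$.

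I do not expect a genuine obstacle: all the matrices involved are $3\times 3$ with small integer entries, and every step reduces to an elementary computation (the paper acknowledges that Mathematica was used to \emph{find} such matrices, not to certify them). The only point deserving a little attention is confirming that $0$ enters the minimal polynomial of $TAT$ to the first power only --- equivalently that $TAT$ has no Jordan block of size $\ge 2$ at $0$ --- since this is precisely what forces the common value of ascent and descent to be $1$ rather than something larger; the distinctness of the three eigenvalues of $TAT$ takes care of this at once.
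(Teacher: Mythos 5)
Your proposal is correct and follows essentially the same route as the paper: a direct computation showing that the minimal polynomial of $T$ is $(\lambda+1)(\lambda-1)^2$, that $TAT$ is similar to $\mathrm{diag}(0,1,2)$ (hence $\alpha=\delta=1$), and that $TBT=E_{21}$ is nilpotent of nilindex two (hence $\alpha=\delta=2$ by Lemma~\ref{lem: basic}). The extra detail you supply (characteristic polynomials, the check $T^2-I\neq 0$) simply makes explicit what the paper leaves as ``direct computation.''
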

	\begin{proof}
		From direct computation it follows that the minimal polynomial of $T0$ equals $(\lambda +1)(\lambda -1)^2$  and, for some invertible matrix $S\in M_3(\F)$ we have
		$TAT = S\mathrm{diag}(0,1,2)S^{-1}$ and $TBT=E_{21}$. The result then easily follows.
	\end{proof}
	
	\begin{lemma} \label{lem:13}
		Assume $a$, $b$, $w\in \F$, $w\neq 0$ and, let
		\begin{align} \label{al:1}
			A(a,b)&=\begin{bmatrix}
				0& b& 0 & 0 \\
				1& a & 0& 0\\
				0 &0& 0 & b \\
				0 & 0& 1 & a
			\end{bmatrix},  \  
			B(a,b,w)= wI + A(a,b), \\ 
			N(t)&=\begin{bmatrix}   \label{al:2}
				0 & 0 & 0 & 0 \\
				1 & 0 & 0 & 0 \\
				t & 1 & 0 & 0 \\
				0 & t &1 & 0
			\end{bmatrix}. 
		\end{align}
		
		{\rm (a)} If $b\neq0$, then there exists a $t_0\in \F$ such that for $N:=N(t_0)$ we have $\alpha(NA(a,b)N)=\delta(NA(a,b)N) =3$ and $\alpha(NB(a,b,w)N)=\delta(NA(a,b,w)N) =2$.
		
		{\rm (b)} Taking $M=I+N(0)$ gives that $\alpha(MA(0,0)M=\delta(MA(0,0)M)=3$ and, $MB(0,0,w)M$ is invertible.
		
	\end{lemma}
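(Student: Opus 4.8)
\textbf{Proof proposal for Lemma \ref{lem:13}.}

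The plan is to verify both (a) and (b) by reducing everything to parts (3), (5), and the injectivity/surjectivity criteria in Lemma \ref{lem: basic}, exactly as in the previous three lemmas, and simply to exhibit the correct value of $t_0$. First I would compute the minimal polynomial of $N(t)$: since $N(t)$ is lower triangular with zero diagonal, it is nilpotent, and one checks that $N(0)$ has nilindex $4$ while for generic $t$ the nilindex drops; in any case $N(t)^3$ and $N(t)^2$ will be the matrices one must control, because $N(t)A(a,b)N(t)$ is a rank $\le 2$ product whose powers are easy to write down. The key observation driving the choice of $t_0$ is this: $A(a,b)$ is block-diagonal with two identical $2\times 2$ blocks $\begin{bmatrix} 0 & b \\ 1 & a\end{bmatrix}$, whose characteristic polynomial is $\lambda^2 - a\lambda - b$; when $b\neq 0$ this block is invertible, so $A(a,b)$ is invertible, whence $B(a,b,w)=wI+A(a,b)$ has the same structure shifted by $w$.

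For (a), after fixing $t_0$ I would show $NA(a,b)N$ is nilpotent of nilindex exactly three — this forces $\alpha=\delta=3$ by Lemma \ref{lem: basic}(3) — and that $NB(a,b,w)N$ is algebraic with minimal polynomial of the form $\lambda^2(\lambda-\mu)$ for some $\mu\neq 0$, so that $\alpha=\delta=2$ by Lemma \ref{lem: basic}(5). The point is that since $N$ has rank at most $3$ and, more to the point, $NXN$ for any $X$ has range contained in $\mathrm{range}\,N$ and kernel containing $\ker N$, the product $NXN$ automatically has low rank; writing $NXN$ out explicitly as a $4\times 4$ matrix for $X=A(a,b)$ and for $X=B(a,b,w)=wI+A(a,b)=NXN+wN^2$ (noting $N\cdot wI\cdot N = wN^2$), the two differ by the fixed nilpotent correction $wN^2$, and choosing $t_0$ so that $NA(a,b)N$ is genuinely nilpotent of index $3$ is what makes the $w$-shifted version pick up a single nonzero eigenvalue. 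I would record the explicit $t_0$ (a rational function of $a,b$ — plausibly $t_0 = -a$ or $t_0$ chosen to kill the appropriate entry) and leave the $4\times4$ matrix multiplications, done in Mathematica, to a one-line "by direct computation" as in the preceding lemmas.

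For (b), the computation specializes to $a=b=0$, so $A(0,0)=\begin{bmatrix}0&0&0&0\\1&0&0&0\\0&0&0&0\\0&0&1&0\end{bmatrix}$ (the two identical nilpotent Jordan-type blocks), $N(0)$ is the standard shift-like nilpotent, and $M=I+N(0)$ is unipotent hence invertible. Then $MA(0,0)M$ is a product of an invertible matrix, a nilpotent, and an invertible matrix, so it is nilpotent; I would check its nilindex is exactly $3$, giving $\alpha=\delta=3$ by Lemma \ref{lem: basic}(3). For $MB(0,0,w)M$ with $w\neq 0$: $B(0,0,w)=wI+A(0,0)$ is invertible (it is $w$ times a unipotent matrix), and $M$ is invertible, so the triple product $MB(0,0,w)M$ is a product of three invertibles, hence invertible. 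That is all (b) asserts.

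The main obstacle is purely bookkeeping: pinning down the single scalar $t_0=t_0(a,b)$ that simultaneously makes $NA(a,b)N$ nilpotent of index precisely $3$ (not $2$, not $4$) \emph{and} makes the minimal polynomial of $NB(a,b,w)N$ equal to $\lambda^2(\lambda-\mu)$ with $\mu\neq 0$ for every admissible $w\neq 0$. Since the correction term is the fixed matrix $wN^2$ independent of $a,b$, once nilpotency of index $3$ is arranged for $NA(a,b)N$ the structure of $NB(a,b,w)N$ should follow uniformly in $w$; so the real work is a finite symbolic computation, which — following the paper's stated convention of using Wolfram Mathematica and the style of Lemmas \ref{lem:list}--\ref{lem:125} — I would present as a verified direct computation rather than reproduce in full.
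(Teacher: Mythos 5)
There is a genuine gap in your plan for part (a): the structural picture you build the argument around is impossible. For every $t$, both $N(t)A(a,b)N(t)$ and $N(t)^2$ are strictly lower triangular (a direct computation of the columns shows $NAN e_1\in\spn\{e_2,e_3,e_4\}$, $NANe_2, NANe_3\in\spn\{e_4\}$, $NANe_4=0$, and similarly for $N^2$), so $NB(a,b,w)N=NAN+wN^2$ is nilpotent for \emph{every} choice of $t$. Hence it can never have a minimal polynomial $\lambda^2(\lambda-\mu)$ with $\mu\neq 0$, and Lemma \ref{lem: basic}(5) is not applicable; your guiding idea that "the $w$-shift makes $NBN$ pick up a nonzero eigenvalue" is false. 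Consequently the selection principle for $t_0$ is also backwards: for generic $t$ one has $(NAN)^2=b(a+2bt)E_{41}\neq 0$ \emph{and} $(NBN)^2=b(a+2bt+2w)E_{41}\neq 0$, i.e.\ both triple products have ascent and descent $3$, which does not separate $A$ from $B$. The choice of $t_0$ is dictated by $B$, not by $A$: one must take $t_0$ with $a+2bt_0+2w=0$ (so $t_0$ necessarily depends on $w$ as well, contrary to your intended $t_0=t_0(a,b)$ valid for all $w$, which cannot exist). With this $t_0$, $(NBN)^2=0$ while $NBN\neq0$, so $\alpha(NBN)=\delta(NBN)=2$ by Lemma \ref{lem: basic}(3) applied with nilindex $2$, and $(NAN)^2=-2bwE_{41}\neq0$ gives nilindex $3$ for $NAN$. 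So the Mathematica computation you defer to would not confirm the statements you plan to verify; the proof as proposed would stall until the mechanism is corrected.

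In part (b) your conclusion about $MB(0,0,w)M$ is fine (product of three invertibles), and is in fact slightly slicker than writing out the matrix. However, the justification you give for nilpotency of $MA(0,0)M$ — "invertible times nilpotent times invertible is nilpotent" — is not a valid general principle: e.g.\ $(I+E_{21})E_{12}(I+E_{21})$ has trace $2$. Here the conclusion happens to be true because the explicit product $MA(0,0)M=E_{21}+E_{31}+E_{42}+E_{43}$ is strictly lower triangular of nilindex $3$, and since you say you would check the nilindex by direct computation this is a faulty justification rather than a fatal error; but the nilpotency itself must come from that computation, not from the invertibility of $M$.
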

	\begin{proof}
		If $b\neq 0$, the assertions follow from
		
		$$NA(a,b)N=
		\begin{bmatrix}
			0 & 0 & 0 & 0 \\
			b & 0 & 0 & 0 \\
			a+b t_0 & 0 & 0 & 0 \\
			at_0 & bt_0 & b & 0 \\
		\end{bmatrix},
		\ \ \ 
		(NA(a,b)N)^2=b(a+2bt_0)E_{41},$$
		and
		$$NB(a,b,w)N=\begin{bmatrix}
			0 & 0 & 0 & 0 \\
			b & 0 & 0 & 0 \\
			a+b t_0+u & 0 & 0 & 0 \\
			t_0 (2u +a) & b t_0+u & b & 0 \\
		\end{bmatrix}, \ \ \ 
		(NB(a,b,w)N)^2=b(a+2bt_0+2w)E_{41}.$$
		There is only one $t_0$ such that  $a+2bt_0+2w=0$. Note that hence $a+2bt_0\neq 0$. Now apply Lemma \ref{lem: basic} to confirm the claims when $b\neq 0$. 
		
		When $a=b=0$, 
		$$MA(0,0)M=\begin{bmatrix}
			0 & 0 & 0 & 0 \\
			1 & 0 & 0 & 0 \\
			1 & 0 & 0 & 0 \\
			0 & 1 & 1 & 0 \\
		\end{bmatrix}$$
		is a nilpotent matrix of nilindex three and, 
		$$MB(0,0,w)M=\begin{bmatrix}
			w & 0 & 0 & 0 \\
			1+2w & w & 0 & 0 \\
			1+w & 2w & w & 0 \\
			0 & 1+w & 1+2w & w \\
		\end{bmatrix}$$
		is invertible since $w\neq 0$.
	\end{proof}
	
	\begin{lemma}\label{lem:tech}
		Let $A\in \bx$ be an algebraic operator of degree two and of rank greater than one. 
		
		$\mathrm(i)$ There exists an  operator $N\in \bx$, such that 
		$\alpha(NAN)=\delta(NAN)\notin \{1,2\} $ or,  $\alpha(ANA)=\delta(ANA)\notin \{1,2\}$. 
		
		$\mathrm(ii)$ If $B=w I + A$ for some scalar $w\neq 0$, then there exists an algebraic operator $N$ of degree at least three such that $\alpha(NAN)\neq \alpha (NBN)$ and  $\delta(NAN)\neq \delta (NBN)$ or, $\alpha(ANA)\neq \alpha(BNB)$ and $\delta(ANA) \neq \delta(BNB)$.
	\end{lemma}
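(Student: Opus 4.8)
The plan is to reduce the general statement to the three explicit $3\times 3$ and $4\times 4$ examples already verified in Lemmas \ref{lem:list}, \ref{lem:12}, \ref{lem:125} and \ref{lem:13}, by exploiting the structure theory of algebraic operators of degree two. Recall that an operator $A$ with $A^2 = \mu A + \nu I$ (equivalently, minimal polynomial of degree two) is, after a shift and scaling, either an idempotent $P$ (when the two eigenvalues are distinct) or of the form $I + R$ with $R^2=0$ a non-zero nilpotent (when there is a repeated eigenvalue). Since $\alpha$ and $\delta$ behave well under similarity and under scalar shifts in a controlled way — via Lemma \ref{lem: basic} — it suffices to treat these two normal forms separately, and in each case to produce the witnessing operator $N$ inside a suitable invariant finite-dimensional subspace.

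First I would handle the idempotent case. If $A$ is similar to an idempotent $P$ of rank $\ge 2$, choose a basis so that $P$ restricted to a three-dimensional invariant subspace looks like $C_a$ with $a=0$ (or, after rescaling the ambient decomposition, like the matrix $A$ in Lemma \ref{lem:125}); then for part (i) take $N$ to be (the ampliation by the identity on a complement of) the matrix $T$ of Lemma \ref{lem:125}, which gives $\alpha(NAN)=\delta(NAN)=1$ but one also checks, using a second basis matrix à la $B$ there, that a companion choice yields value $2$ — more to the point, Lemma \ref{lem:list} with the pair $C_a,C_b$ already exhibits an $N=T_a$ with $\alpha(T_aC_aT_a)=3\notin\{1,2\}$, settling (i). For part (ii), when $B=wI+A$ with the idempotent picture, Lemma \ref{lem:list} is exactly tailored: realize $A$ as $C_a$ and note $wI+C_a$ is similar to $C_b$ for $b=a+w\ne a$ after normalization, so $T_a$ separates them with ascents/descents $3$ versus $2$.

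Next I would treat the nilpotent-plus-scalar case, $A$ similar to $I+R$, $R^2=0$, $R\ne 0$. Here Lemma \ref{lem:12} does the work for (i): the $3\times 3$ model $A_0$ is precisely $I + u E_{12}$, and $T$ of that lemma gives $\alpha(A_0TA_0)=\delta(A_0TA_0)=3$. For (ii), if additionally $\operatorname{rk} R\ge 2$ I would instead pass to the $4\times 4$ model: an algebraic operator of degree two with a rank-$\ge 2$ nilpotent part contains a copy of $A(a,b)$ with $b=0$ (two Jordan blocks of size two), so Lemma \ref{lem:13}(b) with $M=I+N(0)$ yields $\alpha(MA(0,0)M)=\delta(MA(0,0)M)=3$ while $MB(0,0,w)M$ is invertible, hence has ascent and descent $0$; that separates $A$ from $B=wI+A$ as required. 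The remaining subcase, $\operatorname{rk} R=1$ but $\operatorname{rk} A\ge 2$, is covered by Lemma \ref{lem:12} again, comparing $A_0TA_0$ (value $3$) with $B_0TB_0$ (value $2$) after identifying $A$ with $A_0$ and $B=wI+A$ with $B_0$ for an appropriate second parameter $v\ne u$.

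The main obstacle is bookkeeping rather than a single hard idea: one must verify that in each normal form the chosen three- or four-dimensional invariant subspace can be split off as a direct summand on which $N$ acts as the tabulated matrix and the identity elsewhere, so that the ascent/descent of $NAN$ (resp.\ $ANA$) computed on the whole space equals the value computed on that summand — this uses that $\alpha$ and $\delta$ are additive over direct sums and that the complementary block contributes $0$ because $N$ is the identity there and $A$ is invertible there (after the shift). Care is also needed to keep $N$ algebraic of degree $\ge 3$ in part (ii): each of $T_a$, $T$ in Lemma \ref{lem:12}, and $M=I+N(0)$ is algebraic of degree exactly three by the cited lemmas, and ampliating by an identity block does not raise the degree, so this is automatic once the summand decomposition is in place.
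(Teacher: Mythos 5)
Your overall strategy---classify $A$ through its degree-two minimal polynomial and reduce to the tabulated $3\times3$ and $4\times4$ models of Lemmas \ref{lem:list}, \ref{lem:12}, \ref{lem:125}, \ref{lem:13} on a finite-dimensional piece---is the paper's strategy, but two steps genuinely break down. First, the reduction ``after a shift and scaling'' is not available: similarity and scaling leave $\alpha(NAN)$, $\delta(NAN)$ unchanged, but replacing $A$ by $A-\lambda I$ does not, and part (ii) is precisely about distinguishing $A$ from its own shift $B=wI+A$. Concretely, your shift erases the difference between the square-zero case $A^2=0$ and the invertible repeated-root case $A=\lambda_0 I+R$, $\lambda_0\neq0$, $R^2=0$. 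For the latter (your ``$\mathrm{rk}\,R\ge 2$'' subcase) you invoke Lemma \ref{lem:13}(b), but $\lambda_0 I+R$ does not ``contain a copy of $A(a,b)$ with $b=0$'': its $4\times4$ cyclic model is $A(2\lambda_0,-\lambda_0^2)$ with $b=-\lambda_0^2\neq0$, so part (b) of that lemma does not apply; this case needs Lemma \ref{lem:13}(a) (the paper's Case 3.1), which you never use, while (b) is exactly for the genuinely nilpotent $A$ (the paper's Case 2), a case your taxonomy never treats as such.

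Second, in (ii) you silently assume that $B=wI+A$ can always be brought to the second model of the cited lemma ($C_b$, respectively $B_0$). This fails exactly when $-w$ is an eigenvalue of $A$: then $wI+C_a$, respectively $wI+A_0$, is singular or nilpotent and is proportional to no $C_b$ or $B_0$. These degenerate subcases require separate constructions---the paper's rank-one nilpotent versus rank-one idempotent trick when $A=aP$ and $a+w=0$, the computation with $T=E_{12}+E_{32}$ and the use of Lemma \ref{lem:125} when $w=-1$ in Case 3.2---and your sketch omits them entirely (Lemma \ref{lem:125} appears only in a muddled aside about part (i), where Lemma \ref{lem:list} already suffices). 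Finally, the deferred ``bookkeeping'' is not as you state it: the ascent and descent of a direct sum are the maximum, not the sum, of the summands' values; the complementary block of $A$ need not be invertible (take $A=aP$ with $\mathrm{rank}\,P=2$ in infinite dimensions, where it is zero); and your ampliation $N=N_1\oplus I$ requires an $A$-reducing complement of the chosen finite-dimensional subspace, which has to be constructed. The paper sidesteps all of this by taking $N=N_1\oplus 0$ on a merely $A$-invariant subspace, so that $NAN$ (or $ANA$) collapses onto the distinguished corner regardless of the off-diagonal block.
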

	\begin{proof}
		Let $A^2 = aA +bI$ for some $a,b\in \F$. We consider several cases.
		
		{\sc	Case 1}. Assume $b=0$ and $a\neq 0$.
		
		Now, $A=aP$ for some idempotent $P$ of rank at least two. In the proper basis we can write $A=a \mathrm{diag}(1,1,0) \oplus A_1$. Applying Lemma \ref{lem:list} and setting $N=T_a\oplus 0$  we get that $NAN$ is nilpotent of nilindex three and so, $\alpha(NAN)=\delta(NAN)=3$ so (i) holds. 
		
		For (ii) we have $B= \mathrm{diag}(a+w,a+w,w) \oplus B_1$. If $a+w\neq 0$ then $B= (a+w)\mathrm{diag}(1,1,\frac{w}{a+w}) \oplus B_1$.  As $a,\, w\neq 0$, $\frac{w}{a+w}\notin \{0,1\}$.  Now Lemma \ref{lem:list} applies again with the operator $N$  from the previous paragraph which is algebraic of degree three.  	 
		If $a+w=0$ we take $N_1=(N_0 \oplus 1) \oplus0 $ where $N_0$ is any $2\times2$ rank-one nilpotent matrix and so,  $N_1$ is algebraic of degree three.  Then $AN_1A$ is a rank-one nilpotent and so, $\alpha(AN_1A)=\delta(AN_1A)=2$ while $BN_1B$ is a scalar multiple of a rank-one idempotent and so, $\alpha(BN_1B)=\delta(BN_1B)=1$, proving (ii) in this case.
		
		{\sc	Case 2}. Suppose $b=a=0$. 
		Since $\mathrm{rank\,}A >1$, we can assume that in some basis $A=A(0,0) \oplus A_1$, where $A(0,0)\in M_4(\F)$ has been set up in \eqref{al:1}.   Choose  $N=M\oplus 0$, (see (b) of Lemma \ref{lem:13} ) to achieve that    $\alpha(NAN)=\delta(NAN)=3$. Applying (b) of Lemma \ref{lem:13} and the fact that $N$ is algebraic of degree three further gives that $\alpha(NBN)=\delta(NBN)=1$.  
		
		{\sc	Case 3}. Let now $b\neq 0$. 
		Since $A$ is not a scalar operator, we can fix a vector $x$ such that $x$ and $Ax$ are linearly independent.  Fix a vector $y\in \mathcal{X}$, $y\notin \mathrm{span}\{x,Ax\}$, such that the dimension of $\mathcal{Y}:=\mathrm{span}\{x,Ax,y,Ay\}$ is maximal possible. Note that under our assumptions, $3\le \dim \mathcal{Y}\le 4$. Now let $\mathcal{X}=  \mathcal{Y}\oplus \mathcal{Z}$ and write
		\begin{equation}\label{eq:110}
			A= \begin{bmatrix} 
				A_{1} & A_{12} \\ 0 & A_{2}
			\end{bmatrix}
		\end{equation}
		with properly defined $A_1\in M_n(\F)$, $n=\dim  \mathcal{Y}$, and operators $A_{12}\in\mathcal{B(Z,Y)}$,  $A_2\in\mathcal{B(Y)}$.
		
		{\sc	Case 3.1} Assume $\dim \mathcal{X} \ge 4$, $\dim \mathcal{Y} = 4$ and recall $b\neq 0$. 
		
		In the fixed ordered basis $\{x,Ax,y,Ay\}$ operator $A$ can be represented as \eqref{eq:110} with $ A_1=A(a,b)\in M_4(\F) $ introduced in \eqref{al:1}. Applying (a) of Lemma \ref{lem:13} and choosing the nilpotent $N= N(t_0) \oplus 0$ of nilindex four we get that $\alpha(NAN)=\delta(NAN)=3$, and $\alpha(NBN)=\delta(NBN)=2$.

		{\sc	Case 3.2} $\dim \mathcal{Y} = 3$. 
		
		Being algebraic of order two, the minimal polynomial of $A$ either equals $m_A(\lambda)=(\lambda -\lambda_0)^2$ for some $\lambda_0 \neq 0$ or, $m_A(\lambda)=(\lambda -\lambda_1)(\lambda -\lambda_2)$ where both $\lambda_1$, $\lambda_2\in \F$ are distinct nonzero eigenvalues. We consider the first case. By rescaling if necessary, we assume with no loss of generality, that $\lambda_0=1$. Now we can suppose that in the representation \eqref{eq:110} we have
		$A_1=A_0$, where $A_0$ is set up in \eqref{eq:4}. If $w+\lambda_0=w+1 \neq 0$, we may and we do assume that $B=B_0$, by recalling again. Setting $N=T\oplus 0$, where $T$, algebraic of order three, is introduced in \eqref{eq:4}  yields  that $\alpha(ANA)=\delta(ANA)=3$, while $\alpha(BNB)=\delta(BNB)=2$.  If $w=-1$,  then $B^2=0$ and $A=I+B$. Choose $T=(E_{12}+E_{32})\oplus 0$, a nilpotent of nilindex three to get  $TAT=E_{13}\oplus 0$  and $TBT=0$. Then $\alpha(TAT)=\delta(TAT)=2$ and $\alpha(TBT)=\delta(TBT)=1$.
		
		It remains to consider the last case, where $m_A(\lambda)=(\lambda -\lambda_1)(\lambda -\lambda_2)$ with $\lambda_1\neq \lambda_2$, and neither of the eigenvalues being equal to zero. At least one of the corresponding eigenspaces must be of dimension at least two, say $\lambda_1$. There is no loss in generality if we set $\lambda_1=1$. Hence, we can assume that $A$ is of the form  \eqref{eq:110} with $A_1$ of the form $A_0$ in \eqref{eq:3} with $a\neq 1$. Suppose $w+1\neq 0$. Then  $B_1=A_1+wI_3=(w+1)B_0$, where $B_0$ has also been defined in  \eqref{eq:3}. Since $w+1\neq 0$, we can replace $B_1$ by $B_0$. Taking $N=T_a \oplus 0$ leads us to the desired conclusion. In the case $w=-1$, we choose $N=T\oplus 0$, where $T$ has been set up in \eqref{eq:125}. The clearly $\alpha(TAT)=\delta(TAT)=1$ while $\alpha(TBT)=\delta(TBT)=2$ and we are done.
	\end{proof}
	\begin{lemma}\label{lem:0}
		Let $A \in \mathcal{B(X)}$. The following assertions are equivalent.
		\begin{enumerate}
			\item  $A=0$.
			\item  $\alpha (ATA)=1$ and $\alpha (TAT)=1$ for every $T\in \bx$.
			\item  $\delta (ATA)=1$ and $\delta (TAT)=1$ for every $T\in \bx$.
		\end{enumerate}
	\end{lemma}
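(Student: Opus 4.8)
The equivalences (1)$\Rightarrow$(2) and (1)$\Rightarrow$(3) are trivial: if $A=0$ then $ATA=0$ and $TAT=0$ for every $T$, and $\alpha(0)=\delta(0)=1$ by (4) of Lemma~\ref{lem: basic}. So the content is in the two reverse implications (2)$\Rightarrow$(1) and (3)$\Rightarrow$(1), and I would prove both by contraposition: assuming $A\neq 0$, I will produce a single operator $T$ witnessing $\alpha(ATA)\neq 1$ or $\alpha(TAT)\neq 1$ (and simultaneously the same for $\delta$, since all the test operators built in the preliminary lemmas have equal ascent and descent).

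\emph{Case $A$ is not algebraic of degree $\le 2$, or is algebraic of degree $2$ with rank $\ge 2$.} If $A$ has rank at least two and satisfies a quadratic $A^2=aA+bI$, Lemma~\ref{lem:tech}(i) directly hands me an $N$ with $\alpha(NAN)=\delta(NAN)\notin\{1,2\}$ or $\alpha(ANA)=\delta(ANA)\notin\{1,2\}$; in particular this value is $\neq 1$, so (2) and (3) both fail. If $A$ is not algebraic of degree $\le 2$ at all, I would restrict to a finite-dimensional invariant subspace (or a cyclic-type subspace) on which $A$ acts with a minimal polynomial of degree $\ge 3$ — e.g.\ pick $x$ with $x,Ax,A^2x$ linearly independent, complete to a subspace $\mathcal{Y}$ of dimension $3$ or $4$ with $\mathcal{X}=\mathcal{Y}\oplus\mathcal{Z}$, and choose $T=T_0\oplus 0$ for a suitable $3\times3$ or $4\times4$ block $T_0$ so that $T_0A_1T_0$ is nilpotent of index $3$, forcing $\alpha=\delta=3$ by (3) of Lemma~\ref{lem: basic}. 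The matrices furnished in Lemmas~\ref{lem:list}, \ref{lem:12}, \ref{lem:13} are exactly the kind of $3\times3$/$4\times4$ gadgets needed for this.

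\emph{Remaining cases: $A$ algebraic with the zero $T$ giving $\alpha=\delta=1$ everywhere.} These are the delicate cases where $A$ is rank one, or $A=\lambda I$, or $A$ is algebraic of degree $2$ of rank one (minimal polynomial $\lambda(\lambda-\lambda_0)$), etc. If $A=\lambda I$ with $\lambda\neq 0$, then $ATA=\lambda^2 T$, and choosing $T$ to be, say, a nilpotent of index $3$ gives $\alpha(ATA)=3\neq 1$. If $A$ is rank one, $A=x\otimes f$: when $f(x)\neq 0$, $A$ is a nonzero scalar multiple of an idempotent and again $A T A = f(x) x\otimes f \circ \ldots$ — more simply, pick $T$ so that $ATA$ is nilpotent of index $2$ (possible since $\mathrm{rank}\,ATA\le 1$: choose $T$ with $f(TAx)=f(x)^2 \cdot 0$), giving $\alpha=2$; when $f(x)=0$, $A$ itself is nilpotent of index $2$, so take $T=I$ to get $\alpha(A I A)=\alpha(A^2)=\alpha(0)=1$ — that does not help, so instead choose $T=y\otimes g$ with $g(x)\neq 0$ and $f(y)\neq 0$ and $g(y)=0$: then $TAT$ is rank-one nilpotent so $\alpha(TAT)=2\neq 1$. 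Finally, if $A$ has minimal polynomial $\lambda(\lambda-\lambda_0)$ with an eigenspace of dimension $\ge 2$ for eigenvalue $1$ (after rescaling), I am again in the situation handled by Lemma~\ref{lem:list} via $A_1$ of the form $C_1=\mathrm{diag}(1,1,a)$ with $a\in\{0\}$, and $N=T_a\oplus 0$ yields $\alpha(NAN)=3$.

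\emph{Main obstacle.} The bookkeeping of cases is the real work: one must cover \emph{every} nonzero $A$ — rank one (idempotent-type and nilpotent-type), scalar, algebraic of degree $2$ (rank one and rank $\ge 2$, single or double eigenvalue), and everything non-algebraic — and in each case exhibit a test operator defeating \emph{both} the $ATA$ and the $TAT$ versions as needed, with the chosen $T$ of finite rank or block-diagonal so that Lemmas~\ref{lem:list}--\ref{lem:13} apply verbatim. The one genuinely careful point is the rank-one nilpotent case $A=x\otimes f$, $f(x)=0$: here $T=I$ is useless (it gives $\alpha(A^2)=1$), so one must use the $TAT$ alternative with a rank-one $T$ whose range/kernel are chosen transversally to $x$ and $\ker f$, which is possible precisely because $\dim\mathcal{X}\ge 3$. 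I expect the proof to reduce, modulo these case splits, to a handful of one-line verifications plus three or four invocations of the preliminary lemmas.
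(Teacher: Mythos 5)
There is a genuine gap in your treatment of the rank-one case $A=x\otimes f$ with $f(x)\neq 0$. For such $A$ one always has $ATA=(x\otimes f)T(x\otimes f)=f(Tx)\,x\otimes f$, a scalar multiple of $A$ itself; since $f(x)\neq 0$ this is either $0$ or a nonzero multiple of a rank-one idempotent, so $\alpha(ATA)=\delta(ATA)=1$ for \emph{every} $T$ (this is exactly the content of Lemma~\ref{lem:p1}). Hence your plan to ``pick $T$ so that $ATA$ is nilpotent of index $2$'' cannot be carried out: no choice of $T$ defeats the $ATA$ half of conditions (2)/(3) here, and the condition you write, $f(TAx)=f(x)^2\cdot 0$, only forces $ATA=0$, which has ascent $1$. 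The only way out is the $TAT$ half, and it needs an explicit construction: as the paper does, choose $0\neq y\in\ker f$ and a nonzero $g$ with $g(x)=g(y)=0$ (possible because $\dim\mathcal{X}\ge 3$) and set $T=x\otimes g+y\otimes f$; then $TAT=f(x)^2\, y\otimes g$ is a rank-one nilpotent, so $\alpha(TAT)=\delta(TAT)=2\neq 1$.

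Apart from this, your case division essentially matches the paper's proof: algebraic of degree two and rank at least two via Lemma~\ref{lem:tech}(i), nonzero scalar operators via a nilpotent $T$, and the rank-one nilpotent case (where, incidentally, $T=I$ already suffices, since then $TAT=A$ has ascent $2$ --- you dismissed $T=I$ by looking only at $AIA=A^2$). In the non-algebraic case your block-compression sketch can be made to work (it is essentially the construction used in the proof of Lemma~\ref{rk1}), but it is vaguer than needed and the appeal to the specific matrices of Lemmas~\ref{lem:list}--\ref{lem:13} is off-target there; the paper's own choice is simpler: pick $f$ with $f(Ax)\neq 0$, $f(A^2x)=0$ and $T=x\otimes f$, so that $ATA=Ax\otimes A^{\prime}f$ is a rank-one nilpotent with ascent and descent $2$. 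Your final paragraph on the minimal polynomial $\lambda(\lambda-\lambda_0)$ is redundant, as that situation is already covered by Lemma~\ref{lem:tech} (rank $\ge 2$) or by the rank-one case.
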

	
	\begin{proof}
		Obviously, (1) implies (2) and  (3).
		
		To prove that both (2) and (3) imply (1), let $A\ne 0$.
		Assume that there exists an $x$ such that $x$, $Ax$ and $A^2x$ are linearly independent. Then there exists an $f$ such that $f(Ax)\neq 0$ and $f(A^2x)=0$. Setting $T=x \otimes f$ gives that $ATA=Ax\otimes A^\prime f$ is a rank-one nilpotent, thus $\alpha(ATA)=\delta(ATA)=2$.
		
		Otherwise, $x$, $Ax$ and $A^2x$ are linearly dependent for every $x$, and hence, $A$ is algebraic of degree at most $2$. If $A$ is a non-zero scalar operator, we can choose $T$ to be any nilpotent of nilindex two and obtain that $\alpha(ATA)=\delta(ATA)=2$. So, let us further suppose that $A$ is not scalar operator.
		
		If $\rk A=1$, let $A=x\otimes f$. Suppose $f(x)\neq 0$. Let us choose a non-zero $y\in \ker f$. There exists a non-zero $g\in \mathcal{X}^\prime$ such that $g(x)=g(y)=0$. Let $T:=x\otimes g+y\otimes f$ and compute that $TAT=f(x)^2 y\otimes g$. Since $TAT$ is a rank-one nilpotent, we have $\alpha(TAT)=\delta(T)=2$. If $f(x)=0$, just take $T=I$ to see that $\alpha(TAT)=\delta(TAT)=2$.
		
		In the remaining case, when $A$ is algebraic of degree two and of rank greater than one, by setting $T=N$ in Lemma \ref{lem:tech}, one completes the proof of the implications (2) $\Rightarrow$ (1) and (3) $\Rightarrow$ (1). 
		
	\end{proof}	
	\begin{lemma}\label{rk1}
		Let $A\in \bx$ and $A\neq 0$. Then the following is equivalent.
		\begin{enumerate}
			\item $\rk A=1$.
			
			\item For every $T\in \bx$ we have $\alpha(ATA)\in\{1,2\}$ and $\alpha(TAT)\in\{1,2\}$.
			\item For every $T\in \bx$ we have $\delta(ATA)\in\{1,2\}$ and $\delta(TAT)\in\{1,2\}$.
		\end{enumerate}	
		
	\end{lemma}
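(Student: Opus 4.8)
The implications (1) $\Rightarrow$ (2) and (1) $\Rightarrow$ (3) are the easy direction: if $\rk A=1$, write $A=x\otimes f$. Then $ATA = (f(Tx))\, x\otimes f$ and $TAT = f(x)\, Tx\otimes T'f$, so both products have rank at most one. A rank-one operator is either nilpotent, zero, or a nonzero scalar multiple of a rank-one idempotent; in all cases Lemma~\ref{lem: basic} (parts 3 and 4) gives that its ascent and descent lie in $\{1,2\}$. (When the product is literally $0$ we still get ascent/descent equal to $1$.) This handles both (2) and (3) simultaneously.

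The substance is the contrapositive of (2) $\Rightarrow$ (1) and (3) $\Rightarrow$ (1): assuming $A\neq 0$ and $\rk A\geq 2$, I must produce a single $T$ with $\alpha(ATA)=\delta(ATA)\notin\{1,2\}$ or $\alpha(TAT)=\delta(TAT)\notin\{1,2\}$. The plan is to split on the algebraic structure of $A$, mirroring the proof of Lemma~\ref{lem:0}. First, if there exists $x$ with $x, Ax, A^2x$ linearly independent, then (exactly as in the proof of Lemma~\ref{lem:0}, but pushed one step further) I can find $f$ with $f(Ax)=f(A^2x)=0$ and $f(A^3x)\neq 0$ (using that $Ax,A^2x,A^3x$ together with the functional constraints leave enough room, since $x,Ax,A^2x$ independent forces $A$ to act as a size-$\geq 3$ Jordan-type block on this cyclic subspace); setting $T=x\otimes f$ makes $ATA = Ax\otimes A'f$ and more usefully one checks $(ATA)^2\neq 0$ while $(ATA)^3=0$, or more robustly one works with $TAT$. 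Concretely, I expect it is cleanest to take $T = x\otimes f$ where $f$ annihilates $x, Ax, \dots, A^{k-2}x$ but not $A^{k-1}x$ for a suitable $k\geq 3$, giving $TAT$ a Jordan block of size $\geq 3$ and hence ascent/descent $\geq 3$. Otherwise, $x,Ax,A^2x$ are dependent for every $x$, so $A$ is algebraic of degree at most two.

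Within the algebraic-of-degree-$\leq 2$ case I would run through the same subcases as in Lemma~\ref{lem:0}. If $A$ is a nonzero scalar, pick $T$ nilpotent of nilindex $\geq 3$; then $ATA$ is a nonzero scalar multiple of $T^3$-type behaviour — more simply, $ATA = A^2 T$ has ascent/descent equal to that of $T$, which is $\geq 3$. If $\rk A =1$ we are in case (1), excluded. The genuinely interesting subcase is $A$ algebraic of degree exactly two with $\rk A\geq 2$: here I invoke Lemma~\ref{lem:tech}(i) directly, which hands me an operator $N$ with $\alpha(NAN)=\delta(NAN)\notin\{1,2\}$ or $\alpha(ANA)=\delta(ANA)\notin\{1,2\}$ — exactly what is needed. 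So the only real work beyond citing Lemma~\ref{lem:tech} is the ``large cyclic subspace'' case and the scalar case.

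The main obstacle is the first case (existence of $x$ with $x,Ax,A^2x$ independent): I need the resulting triple Jordan product to have ascent/descent \emph{exactly} coinciding and \emph{strictly greater than $2$}, not merely an ascent that is large while the descent is small or undefined. Choosing $T$ a rank-one operator of the form $x\otimes f$ is the right move because then $TAT = f(x)\,(Tx)\otimes(T'f)$ stays rank one — wait, that caps ascent at $2$, which is the opposite of what I want; so instead I should pick $T$ so that $TAT$ (or $ATA$) restricted to the cyclic subspace $\mathrm{span}\{x,Ax,A^2x,\dots\}$ is conjugate to a single nilpotent Jordan block of size $\geq 3$ and is zero (or invertible-complementary) elsewhere. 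The cleanest device: let $k\geq 3$ be the degree of the $A$-cyclic subspace generated by $x$, choose $f\in\mathcal{X}^*$ with $f(A^j x)=\delta_{j,k-1}$ for $0\leq j\leq k-1$ and $f$ vanishing on a complementary $A$-invariant (or merely complementary) subspace, and set $T=x\otimes f$; then $ATA$ maps the flag $\mathrm{span}\{Ax\}\subset\mathrm{span}\{Ax,A^2x\}\subset\cdots$ downward like a nilpotent Jordan block, giving $\alpha(ATA)=\delta(ATA)=k-1\geq 2$ — and if $k\geq 4$ we are done, while the borderline $k=3$ needs a slightly different $T$ (a rank-two one, or the $x\otimes g + y\otimes f$ trick) to push the index to $3$. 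Verifying that this single $T$ controls the \emph{whole} space (not just the cyclic part) so that the global ascent and descent both equal the same value $\geq 3$ is the point requiring care, and I would handle it by choosing $T$ to act as zero off the cyclic subspace, so the global ascent/descent equals the one computed on the cyclic part.
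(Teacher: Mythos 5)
There is a genuine gap in the main case of the hard direction. If $T=x\otimes f$ has rank one, then $TAT=f(Ax)\,x\otimes f$ and $ATA=(Ax)\otimes (A'f)$ are \emph{always} of rank at most one, so their ascent and descent automatically lie in $\{1,2\}$, no matter how cleverly $f$ is chosen. Hence no rank-one witness $T$ can ever detect $\rk A\ge 2$, and your claim that with $f(A^jx)=\delta_{j,k-1}$ the operator $ATA$ acts like a nilpotent Jordan block of size $k-1$, giving $\alpha(ATA)=\delta(ATA)=k-1$, is false for every $k\ge 4$ (the operator is rank one, so its index is at most $2$). You noticed this obstruction once (``that caps ascent at $2$'') but then reverted to the same device, relegating the necessary fix (a rank-two $T$) only to the ``borderline $k=3$'' case, whereas a higher-rank $T$ is needed for \emph{every} $k$. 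The paper's proof closes exactly this point: when $x$, $Ax$, $A^2x$ are linearly independent, choose a closed complement $\mathcal{Y}$ of $\mathrm{span}\{x,Ax,A^2x\}$ and define the rank-two operator $T$ by $Tx=0$, $TAx=x$, $TA^2x=Ax$, $T|_{\mathcal{Y}}=0$; then $TAT$ is nilpotent of nilindex three, so $\alpha(TAT)=\delta(TAT)=3\notin\{1,2\}$. Your proposal, as written, does not contain a working construction for this case, and since it is the central case of the lemma, the proof is incomplete.

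The remaining parts are essentially sound and parallel the paper: the easy direction (rank-one $A$ forces $ATA$, $TAT$ to have rank at most one, hence index in $\{1,2\}$; note the small slip that $TAT=Tx\otimes T'f$, without the factor $f(x)$), the scalar case (your choice of a nilpotent $T$ of nilindex $3$ works, giving index $3$; the paper instead takes $T=A$, making $TAT$ invertible with index $0$), and the algebraic-of-degree-two, rank-$\ge 2$ case, which both you and the paper dispose of by citing Lemma \ref{lem:tech}(i).
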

	\begin{proof}
		Implications (1) to (2) and (1)  to (3) are clear because the ascent/descent of every rank-at-most-one operator is either $1$ or $2$ and, the existence of $T_0$ excludes the zero operator.
		
		Next we simultaneously prove that from either (2) or (3), follows (1). Assume $\mathrm{rank\,}A \neq 1$. 
		If $A=\lambda I$ for some $\lambda\neq 0$, then $A$ is invertible and so, by choosing $T=A$, we have $\alpha(TAT)=\delta(TAT)=0$. Let now $A$ be a non-scalar operator of rank greater than one.
		We will find an operator  $B\in \bx$ such that $\alpha(ABA)\notin\{1,2\}$ or $\alpha(BAB)\notin\{1,2\}$.
		Suppose that there is an $x$ such that $x$, $Ax$ and $A^2x$ are linearly independent. Then there exists a closed subspace $\mathcal{Y}$ such that $\mathcal{X}=  \mathrm{span}\{x,Ax,A^2x\} \oplus \mathcal{Y}$.  We define an operator $T$ by $Tx=0$, $TAx=x$ and $TA^2x=Ax$ and, $Ty=0$ for all $y\in \mathcal{Y}$.  We easily compute that $TAT$ is nilpotent of nilindex three and thus,  $\alpha(TAT)=\delta(TAT)=3$.
		
		In the case when for every $x\neq 0$, the vectors $x$, $Ax$ and $A^2x$ are linearly dependent, $A$ is an algebraic operator of degree  $2$ since $A$ is not a scalar operator. Since the rank of $A$ is greater than one, by Lemma \ref{lem:tech} there exists an operator $N$ such that $\alpha(NAN)= \delta(NAN)\notin \{1,2\}$ or, $\alpha(ANA)=\delta(ANA)\notin \{1,2\}$, neither (2) nor (3) are valid.
		
	\end{proof}
	\begin{lemma}\label{lem:p1}
		Let $A$ be a rank-one operator. Then $A\in \F  \mathcal{P}_1$ if and only if $\alpha(ATA)=1$ for every $T\in \bx$ if and only if $\delta(ATA)=1$ for every $T\in \bx$.
	\end{lemma}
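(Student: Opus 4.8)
The plan is to prove the two equivalences by characterizing rank-one operators $A$ whose image under the map $T\mapsto ATA$ always has ascent (resp. descent) exactly $1$. Write $A=x\otimes f$ with $x\in\mathcal X$, $f\in\mathcal X^\ast$ both nonzero. A direct computation gives $ATA=(f(Tx)\,x)\otimes f=f(Tx)\,(x\otimes f)$, so $ATA$ is a scalar multiple of $A$ itself for every $T$. Hence the behaviour of $\alpha(ATA)$ and $\delta(ATA)$ depends only on whether the scalar $f(Tx)$ vanishes and on the nature of $A$ as a rank-one operator. If $f(x)=c\neq 0$, then $A=x\otimes f$ is $c$ times the rank-one idempotent $\frac1c x\otimes f$, and for any $T$ with $f(Tx)\neq 0$ the operator $ATA=f(Tx)\,(x\otimes f)$ satisfies $(ATA)^2=f(Tx)^2 f(x)\,(x\otimes f)=c\,f(Tx)\,(ATA)$, so it is a nonzero scalar multiple of an idempotent, and by Lemma \ref{lem: basic}(4) its ascent and descent equal $1$; when $f(Tx)=0$ we get $ATA=0$, whose ascent and descent are also $1$ by the same lemma. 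This establishes one direction: $A\in\F\,\mathcal P_1$ implies $\alpha(ATA)=\delta(ATA)=1$ for all $T$.

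For the converse, suppose $A=x\otimes f$ is rank-one but $A\notin\F\,\mathcal P_1$, i.e. $f(x)=0$, so $A$ is a rank-one nilpotent. Then $A$ itself has $\alpha(A)=\delta(A)=2$ by Lemma \ref{lem: basic}(3). It suffices to exhibit a single $T$ with $f(Tx)\neq 0$: take any $y\in\mathcal X$ with $f(y)\neq 0$ and set $T=y\otimes g$ for some $g\in\mathcal X^\ast$ with $g(x)\neq 0$ (such $g$ exists since $x\neq 0$). Then $Tx=g(x)\,y$, so $f(Tx)=g(x)f(y)\neq 0$, and $ATA=f(Tx)\,(x\otimes f)$ is a nonzero rank-one nilpotent, whence $\alpha(ATA)=\delta(ATA)=2\neq 1$. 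This contradicts either hypothesis, so $A$ must lie in $\F\,\mathcal P_1$. Combining the two directions closes the chain of equivalences, since the ascent and descent computations ran in parallel throughout.

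The argument is short and I do not foresee a genuine obstacle; the only point requiring a little care is the bookkeeping of the scalar $f(Tx)$ and the fact that a nonzero scalar multiple of a rank-one idempotent still has ascent/descent $1$ (covered by Lemma \ref{lem: basic}(4), which explicitly allows $A\neq I$), together with noting that in dimension at least three one can always find the auxiliary functionals $g$ and vectors $y$ needed in the converse. No appeal to the Jordan triple product with a varying middle factor beyond $T\mapsto ATA$ is needed here, which is why this lemma is so much cleaner than the preceding ones.
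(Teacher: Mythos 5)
Your proof is correct and follows essentially the same route as the paper: the key computation $ATA=f(Tx)\,(x\otimes f)$ and, in the converse, the choice of a $T$ with $f(Tx)\neq 0$ making $ATA$ a rank-one nilpotent of ascent and descent $2$ is exactly the paper's argument, with your forward direction merely spelling out what the paper dismisses as clear. The only cosmetic point is that Lemma \ref{lem: basic}(4) is stated for idempotents rather than their nonzero scalar multiples, but since $\ker(\mu B)^n=\ker B^n$ and $\mathrm{range}\,(\mu B)^n=\mathrm{range}\,B^n$ for $\mu\neq 0$, this is immediate.
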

	\begin{proof}
		The sufficient part is clear. Assume now that $A=x\otimes f\notin \F\mathcal{P}_1$. Then $A\neq0$ and $f(x)=0$. There exists an operator $T$ such that $f(Tx)\neq 0$. Then $ATA=f(Tx)x\otimes f$ and $\alpha(ATA)=\delta(ATA) =2$.
	\end{proof}
	\begin{lemma}\label{lem:N1}
		Let $A,B\in \bx$. Then $B=u I +v A$ for some scalars  $u,v$, $v\neq 0$,   if and only if for every $N\in \mathcal{N}_1(\mathcal{X})$ the following holds true:
		\[ NAN\in \mathcal{N}_1(\mathcal{X}) \Longleftrightarrow   NBN\in \mathcal{N}_1(\mathcal{X}). \]
		
	\end{lemma}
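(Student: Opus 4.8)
\emph{Plan of proof.} For the direct implication, suppose $B=uI+vA$ with $v\neq 0$ and take $N=x\otimes f\in\mathcal{N}_1(\mathcal{X})$, so that $f(x)=0$. A one-line computation gives $NAN=f(Ax)\,x\otimes f$ and $NBN=f(Bx)\,x\otimes f=v\,f(Ax)\,x\otimes f$, the last equality because $f(x)=0$. Hence $NAN\in\mathcal{N}_1(\mathcal{X})\Leftrightarrow f(Ax)\neq0\Leftrightarrow NBN\in\mathcal{N}_1(\mathcal{X})$, as wanted.

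For the converse I would first reduce the hypothesis to a linear-algebraic condition. Running the same computation backwards, the stated equivalence (quantified over all $N=x\otimes f$ with $f(x)=0$) says exactly that, for every nonzero $x$ and every functional $f$ annihilating $x$, one has $f(Ax)=0\Leftrightarrow f(Bx)=0$; equivalently $\{x,Ax\}^\circ=\{x,Bx\}^\circ$. Since bounded functionals separate the points of $\mathcal{X}$ and finite-dimensional subspaces are closed, this is equivalent to
\[ (\star)\qquad \spn\{x,Ax\}=\spn\{x,Bx\}\quad\text{for every }x\neq0 . \]
In particular $Bx\in\spn\{x,Ax\}$ for all $x$. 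If $A\in\F I$, then $(\star)$ forces every vector to be an eigenvector of $B$, so $B\in\F I$ and the conclusion holds with a suitable $v\neq0$; so assume henceforth that $A$ is not scalar. Note also that $v$ in the conclusion must then be nonzero, since $v=0$ would make $B$ scalar, contradicting $(\star)$ together with $A$ being non-scalar. Thus the whole task is to promote the pointwise inclusion $Bx\in\spn\{x,Ax\}$ to the global identity $B=uI+vA$.

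To do this, for each $x$ with $\{x,Ax\}$ linearly independent write $Bx=\beta(x)Ax+\alpha(x)x$ (the coefficients being uniquely determined by $(\star)$); the claim is that $\alpha$ and $\beta$ are constant. If $A$ admits a vector $x_0$ with $\{x_0,Ax_0,A^2x_0\}$ linearly independent, I would substitute the test vectors $x_0+Ax_0$ and $x_0+2Ax_0$ (and, if needed, $x_0+A^2x_0$ and $2x_0+A^2x_0$) into $(\star)$ and compare coordinates in the basis $\{x_0,Ax_0,A^2x_0\}$; this pins down $B=\alpha I+\beta A$ on $\spn\{x_0,Ax_0,A^2x_0\}$. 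I would then propagate: for $w$ with $\{x_0,Ax_0,w,Aw\}$ linearly independent, applying $(\star)$ to $x_0+w$ and intersecting $\spn\{x_0+w,Ax_0+Aw\}$ with $\spn\{w,Aw\}$ (which meet only in $0$) yields $(B-\alpha I-\beta A)w=0$; the few remaining configurations (including the whole case $\dim\mathcal{X}=3$, where in fact $\spn\{x_0,Ax_0,A^2x_0\}=\mathcal{X}$ already) are reached either by varying $x_0$ inside $\spn\{x_0,Ax_0,A^2x_0\}$ or through a short chain of such steps. The remaining case is that $A$ is algebraic of degree two; then $A$ has no three-dimensional cyclic subspace, and I would argue according to the shape of the minimal polynomial of $A$---$(\lambda-\lambda_1)(\lambda-\lambda_2)$ with $\lambda_1\neq\lambda_2$, $(\lambda-\lambda_0)^2$, or, over $\R$, an irreducible quadratic---decomposing $\mathcal{X}$ into $A$-invariant subspaces of dimension at most two and matching the locally defined scalars $\alpha,\beta$ across them by feeding sums $w_1+w_2$ of vectors from distinct summands into $(\star)$, in the same spirit as the proof of Lemma~\ref{lem:tech}; this is where $\dim\mathcal{X}\geq3$ enters essentially.

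The hardest part is precisely this last case (and, to a lesser extent, the propagation step): inside a single $A$-invariant block of dimension two the condition $(\star)$ carries no information, so the entire content of the argument consists of reconciling the locally defined coefficients $\alpha,\beta$ across different blocks, which forces a genuine case analysis and a real use of the dimension hypothesis.
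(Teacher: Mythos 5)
Your forward direction and your reduction of the hypothesis to the pointwise condition $(\star)$ (that $\spn\{x,Ax\}=\spn\{x,Bx\}$ for every $x\neq 0$, via Hahn--Banach and closedness of finite-dimensional subspaces) are correct and coincide with the first half of the paper's proof. The divergence is in how the local condition is globalized: the paper disposes of this in one line by citing Lemma 2.4 of the Li--\v{S}emrl--Sze paper \cite{9}, which is exactly the statement that $Bx\in\spn\{x,Ax\}$ for all $x$ (in dimension at least $3$) forces $B=uI+vA$, whereas you undertake to prove this local-to-global step from scratch. What you offer for it, however, is a plan rather than a proof: the coordinate-comparison on $\spn\{x_0,Ax_0,A^2x_0\}$ (which in the cyclic case must also handle $BA^2x_0$ and hence $A^3x_0$), the ``few remaining configurations'' in the propagation step, the whole $\dim\mathcal{X}=3$ case, and the entire algebraic-degree-two case are announced but not carried out, and you yourself identify the last of these as carrying the real content. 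That is a genuine gap.

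Moreover, one of the devices you describe would fail as stated. In the degree-two case with minimal polynomial $(\lambda-\lambda_1)(\lambda-\lambda_2)$, $\lambda_1\neq\lambda_2$, feeding a cross sum $w_1+w_2$ (with $w_i\in\ker(A-\lambda_i)$) into $(\star)$ gives no information: since $\lambda_1\neq\lambda_2$, $\spn\{w_1+w_2,A(w_1+w_2)\}=\spn\{w_1,w_2\}$, and $B(w_1+w_2)=\mu_1w_1+\mu_2w_2$ lies there automatically, so there is nothing to ``reconcile'' this way. That subcase has to be settled differently, e.g.\ by observing that $(\star)$ makes $B$ a scalar on each spectral subspace, so $B\in\spn\{P_1,P_2\}=\spn\{I,A\}$, with $v\neq 0$ because otherwise $B$ is scalar and the reverse implication of the hypothesis would force $A$ to be scalar. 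The simplest repair of your proof is the paper's own: after establishing $(\star)$, invoke \cite[Lemma 2.4]{9} (or prove that lemma in full, with the above correction and the omitted cases actually written out).
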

	
	\begin{proof}
		Let $N=x\otimes g$, $g(x)=0$. Then $NAN=g(Ax)N$ and $NBN=g(Bx)N$. Assume that there exists an $x$ such that $Bx \notin \langle x, Ax \rangle$. Then, we can find a $g$ such that $g(x)=g(Ax)=0$ and  $g(Bx)\ne 0$ giving that $NAN\notin \mathcal{N}_1(\mathcal{X})$ and $NBN\in \mathcal{N}_1(\mathcal{X})$. Otherwise, we have $Bx \in \langle x, Ax \rangle$ for every $x$ and by applying Lemma 2.4 in \cite{9}, we have $B=u I +v A$ for some non-zero $v$.
	\end{proof}
	The following is a straightforward consequence of the previous Lemma.
	\begin{corollary}\label{cor1}
		Let $M$ and $K$ be rank-one nilpotent operators. Then $M$ and $K$ are linearly dependent if and only if the following condition is satisfied.
		$$NMN\in \mathcal{N}_1(\mathcal{X}) \Longleftrightarrow   NKN\in \mathcal{N}_1(\mathcal{X})~~~\ \ \ ( N \in \mathcal{N}_1(\mathcal{X})).$$
	\end{corollary}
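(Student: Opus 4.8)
The plan is to read off both directions of the equivalence directly from Lemma~\ref{lem:N1}, instantiated with $A=M$ and $B=K$; the only extra ingredient needed is the standing assumption $\dim\mathcal{X}\ge 3$.

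First I would dispatch the easy implication. If $M$ and $K$ are linearly dependent, then---both being nonzero, as rank-one nilpotents---we have $K=cM$ for some $c\in\F^\ast$. For an arbitrary $N\in\mathcal{N}_1(\mathcal{X})$ this gives $NKN=c\,NMN$; since scaling by the nonzero scalar $c$ changes neither the rank nor the nilpotency of an operator, $NKN\in\mathcal{N}_1(\mathcal{X})$ exactly when $NMN\in\mathcal{N}_1(\mathcal{X})$, which is the stated condition.

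For the converse, I would observe that the displayed biconditional is precisely the right-hand side of the equivalence in Lemma~\ref{lem:N1} under the substitution $A=M$, $B=K$. Hence there exist scalars $u,v$ with $v\neq 0$ such that $K=uI+vM$. Then $uI=K-vM$ is a difference of two rank-one operators, so $\rk(uI)\le 2$; but a nonzero multiple of the identity on $\mathcal{X}$ has rank $\dim\mathcal{X}\ge 3$, so necessarily $u=0$, and therefore $K=vM$ with $v\ne 0$, i.e.\ $M$ and $K$ are linearly dependent.

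I do not expect any genuine obstacle here; the statement is a true corollary of the preceding lemma. The one subtle point is eliminating the scalar summand $uI$ in the converse, and that is exactly where $\dim\mathcal{X}\ge 3$ enters: on a two-dimensional space a nonzero multiple of $I$ has rank $2$ and could in principle be written as a difference of two rank-one operators, so the rank argument would fail---but that case is ruled out by hypothesis.
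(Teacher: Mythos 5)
Your proof is correct and follows the same route the paper intends: the paper states the corollary as a ``straightforward consequence'' of Lemma~\ref{lem:N1}, which is exactly your argument, with the scalar term $uI$ eliminated by the rank count using $\dim\mathcal{X}\ge 3$ (and the easy direction checked directly via $NKN=c\,NMN$). No gaps; your filling-in of the $u=0$ step is precisely the detail the paper leaves to the reader.
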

	
	\begin{proposition}\label{prop:sim}
		Let $M,N\in\mathcal{N}_1(\mathcal{X})$ be linearly independent. Then the following
		are equivalent:
		\begin{itemize}
			\item[(1)]	$M \sim N$.
			\item[(2)]  There exists a $B \in \mathcal{N}_1(\mathcal{X})$ such that $B$ is neither a scalar multiple of $M$ nor is a scalar multiple of $N$  and, for
			every $T \in \mathcal{B(X)}$, we have
			\begin{equation}
				MTM\notin \mathcal{N}_1(\mathcal{X}) \text{ and } NTN\notin \mathcal{N}_1(\mathcal{X}) \Rightarrow  BTB\notin \mathcal{N}_1(\mathcal{X}).
			\end{equation}
		\end{itemize}
	\end{proposition}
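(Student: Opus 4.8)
The plan is to reduce condition (2) to a purely linear-algebraic statement about the plane $\spn\{M,N\}$, and then decide that statement by a short computation with rank-one operators. Write $M=x\otimes f$ and $N=y\otimes g$ with $f(x)=g(y)=0$; recall that $M\sim N$ means precisely that $x,y$ are linearly dependent or $f,g$ are linearly dependent. For any rank-one operator $z\otimes h$ and any $T\in\bx$ one has $(z\otimes h)T(z\otimes h)=h(Tz)\,(z\otimes h)$, so if $z\otimes h\in\mathcal{N}_1(\mathcal{X})$ then $(z\otimes h)T(z\otimes h)\in\mathcal{N}_1(\mathcal{X})$ if and only if $h(Tz)\neq0$. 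Applying this to $M$, $N$, and a candidate $B=z\otimes h\in\mathcal{N}_1(\mathcal{X})$, condition (2) says exactly: there exists $B=z\otimes h\in\mathcal{N}_1(\mathcal{X})$, not proportional to $M$ or to $N$, with
\[
f(Tx)=0\ \text{ and }\ g(Ty)=0\ \Longrightarrow\ h(Tz)=0\qquad(T\in\bx).
\]

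To each rank-one operator $u\otimes\psi$ associate the linear functional $\Phi_{u\otimes\psi}\colon\bx\to\F$, $\Phi_{u\otimes\psi}(T)=\psi(Tu)=\mathrm{tr}\bigl(T(u\otimes\psi)\bigr)$. The map $u\otimes\psi\mapsto\Phi_{u\otimes\psi}$ extends linearly to finite-rank operators and is injective there, since $\Phi_R=0$ forces $\psi(Ru)=\mathrm{tr}\bigl((u\otimes\psi)R\bigr)=0$ for all $u,\psi$, hence $R=0$; in particular $\Phi_M,\Phi_N$ are linearly independent because $M,N$ are. The displayed implication is exactly the statement that $\Phi_B$ vanishes on $\ker\Phi_M\cap\ker\Phi_N$, so by the elementary (dimension-free) fact that a functional vanishing on the intersection of the kernels of finitely many functionals lies in their span, $\Phi_B=\alpha\Phi_M+\beta\Phi_N=\Phi_{\alpha M+\beta N}$ for some $\alpha,\beta\in\F$, whence $B=\alpha M+\beta N$ by injectivity. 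As $B$ is a nonzero operator not proportional to $M$ or to $N$, necessarily $\alpha\neq0$ and $\beta\neq0$. Conversely, if $\alpha M+\beta N$ is a rank-one nilpotent with $\alpha\beta\neq0$, it is automatically not proportional to $M$ or $N$ (as $M,N$ are independent) and it witnesses (2), the required implication following at once from $\Phi_{\alpha M+\beta N}=\alpha\Phi_M+\beta\Phi_N$. Thus (2) is equivalent to: $\spn\{M,N\}$ contains a rank-one nilpotent not proportional to $M$ or $N$.

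It then remains to check when $\alpha x\otimes f+\beta y\otimes g$ with $\alpha\beta\neq0$ can have rank one. If $x,y$ are linearly independent and $f,g$ are linearly independent, then $\ker(\alpha x\otimes f+\beta y\otimes g)=\ker f\cap\ker g$ has codimension two, so every such combination has rank two and (2) fails; in this case also $M\not\sim N$, as wanted. If $y=\lambda x$ with $\lambda\neq0$ — so $f,g$ are independent because $M,N$ are — then $\alpha M+\beta N=x\otimes(\alpha f+\beta\lambda g)$ has rank one, is nilpotent (since $f(x)=g(x)=0$), and is proportional to neither $M$ nor $N$; hence (2) holds, and here $M\sim N$. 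The case $g=\mu f$ is handled symmetrically with $B=(\alpha x+\beta\mu y)\otimes f$. Collecting the cases, (2) holds if and only if $x,y$ or $f,g$ are linearly dependent, i.e.\ if and only if $M\sim N$, which is the asserted equivalence.

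The only delicate point is the transition from the ``for all $T$'' condition to $B\in\spn\{M,N\}$: it hinges on identifying $T\mapsto\psi(Tu)$ with the trace pairing against $u\otimes\psi$ and then invoking the purely algebraic duality lemma for linear functionals, so nothing topological is needed even when $\mathcal{X}$ is infinite-dimensional. The remaining work — deciding when a nontrivial combination of two fixed rank-one operators has rank one — is the genuine geometric content, and it is precisely what the relation $\sim$ encodes.
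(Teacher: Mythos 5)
Your proof is correct, and it takes a genuinely different route from the paper. The paper argues constructively: for (1)$\Rightarrow$(2) it exhibits the witness $B=M+N$ and verifies the implication directly, while for (2)$\Rightarrow$(1) it assumes $x,y$ and $f,g$ are both independent and then, by a case analysis on whether $z\in\spn\{x,y\}$ and $h\in\spn\{f,g\}$, builds for every candidate $B=z\otimes h$ a concrete $T$ with $f(Tx)=g(Ty)=0$ but $h(Tz)\neq 0$; one of these constructions uses $\dim\mathcal{X}\ge 3$ to pick a nonzero vector in $\ker f\cap\ker g$. You instead linearize the whole of condition (2): via the pairing $\Phi_R(T)=\mathrm{tr}(TR)$ and the standard kernel-intersection lemma for linear functionals (dimension-free, no topology beyond Hahn--Banach for injectivity of $R\mapsto\Phi_R$ on finite-rank operators), you show that the admissible witnesses $B$ are exactly the rank-one nilpotents in $\spn\{M,N\}$ not proportional to $M$ or $N$, and then you decide by an easy rank computation when such elements exist. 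This buys a cleaner converse with no case-by-case construction of $T$, a sharper description of all possible $B$, and an argument that does not use $\dim\mathcal{X}\ge 3$; the paper's method is more elementary and hands-on, producing explicit operators $T$, which is in the same spirit as the constructions it reuses elsewhere. One presentational caveat: you take as given that $M\sim N$ means that the ranges or the kernels coincide (equivalently, $x,y$ or $f,g$ are dependent); this matches the meaning the paper's own proof uses, but you should state it as the definition you are working with.
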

	\begin{proof}
		Suppose that $M$ and $N$ have the same range. Then, we can write $M=x\otimes f$ and $N=x\otimes g$, $f(x)=g(x)=0$, for some linearly independent functionals  $f$ and  $g$.  Set $B:=M + N\in \mathcal{N}_1(\mathcal{X})$ and compute $MTM = f(Tx)M$ and $NTN=g(Tx)N $. As $M\in \mathcal{N}_1(\mathcal{X})$, we have	$MTM,\ NTN\notin \mathcal{N}_1(\mathcal{X}) $ if and only if $f(Tx)=g(Tx)=0$. Then  $BTB=(f+g)(Tx)B\notin  \mathcal{N}_1(\mathcal{X})$ since $(f+g)(Tx)=0$.
		
		In the case when  $M$ and $N$ have the same kernel, the same $B$ would do the job.
		
		To prove the other direction, assume that $M = x\otimes f $ and $N =y \otimes g$, $f(x)=g(y)=0$, are rank-one nilpotents
		such that $x$ and $y$ as well as $f$ and $g$ are linearly independent. We have to show that for every operator $B\in \mathcal{N}_1(\mathcal{X})$ there exists an operator $T\in \mathcal{B(X)}$ such that 	$MTM\notin \mathcal{N}_1(\mathcal{X})$ and $NTN\notin \mathcal{N}_1(\mathcal{X})$ but $BTB\in \mathcal{N}_1(\mathcal{X})$.
		
		Fix a rank-one nilpotent $B=z\otimes h$, $h(z)=0$. It will suffice to define an operator $T$ such that $f(Tx)=g(Ty)=0$ and $h(Tz)\neq 0$.
		
		Suppose that $x,y,z$ are linearly independent. We can always find a nonzero vector $x_0\in\ker f \cap \ker g$, since $\dim \mathcal{X}\ge 3$, and, we choose a vector $z_0$ such that $h(z_0)\neq0$.  Then we can define an operator $T$ such that $Tx=x_0$, $Ty=x_0$ and $Tz=z_0$.
		
		Let now $z=\beta x+\gamma y$ for some $\beta,\gamma$ not both equal to zero and $h\notin \langle f,g\rangle$. Then there exist $x_0,y_0$ such that $f(x_0)=g(y_0)=0$, $h(x_0)=\overline{\beta}$ and  $h(y_0)=\overline{\gamma}$. Define $Tx=x_0$, $Ty=y_0$. We have $f(Tx)=g(Ty)=0$ and   $h(Tz)=\beta h(Tx) +\gamma h(Ty)=|\beta|^2+|\gamma|^2 \neq 0$.
		
		Finally, let  $z=\beta x+\gamma y$ and $h=\mu f +\nu g$. Since $B=z\otimes h$ is neither a scalar multiple of $N$ nor a multiple of  $M$, $z\neq 0$ and $h\neq 0$, we infer that  $\gamma \mu$ and $\beta \nu$ cannot be both equal to zero. Now, there exist $x_0,y_0$ such that $f(x_0)=g(y_0)=0$ and $g(x_0)=\overline{\beta \nu}$ and  $f(y_0)=\overline{\gamma \mu}$. Set $Tx=x_0$ and $Ty=y_0$. Then, $f(Tx)=g(Ty)=0$ but
		\begin{align*}
			h(Tz) &=(\mu f +\nu g)(\beta Tx+\gamma Ty )\\
			&=(\mu f +\nu g)(\beta x_0+\gamma y_0 )\\
			&=\gamma\mu f(y_0) +\beta \nu g(x_0) \\
			&=|\gamma\mu|^2 +|\beta \nu|^2 \neq 0.
		\end{align*}
	\end{proof}
	
	The previous result has provided a sort of geometrical structure on the set of rank-one nilpotents, which can be further used to determine the form of the restriction of $\phi$ to the set of at most rank-one nilpotents.

	\begin{proposition}\label{prop:DHB}
		Let $\mathcal{X}$ be a complex  Banach space of dimension at least three. Suppose that $\psi:\mathcal{N}_1(\mathcal{X})\cup \{0\}\to \mathcal{N}_1(\mathcal{X})\cup \{0\}$ is a surjective mapping  satisfying $\psi(0)=0$, preserving linear dependency in both directions and
		\[ T\sim S  \Longleftrightarrow \psi(T)\sim \psi(S) \]
		for all $T$, $S \in \mathcal{N}_1(\mathcal{X})$.
		
		If $\mathcal{X}$ is infinite-dimensional space, then  either there exist an invertible bounded linear or conjugate-linear operator $A: \mathcal{X}\to \mathcal{X}$ and a map $\lambda : \mathcal{N}_1(\mathcal{X})\to \F^\ast$ such that
		\[\psi(T)=\lambda(T) ATA^{-1},\ \ \ T\in  \mathcal{N}_1(\mathcal{X}), \]
		or,
		there exists an invertible bounded linear or conjugate-linear operator $A: \mathcal{X}^\ast\to \mathcal{X}$, a map $\lambda : \mathcal{N}_1(\mathcal{X})\to \F^\ast$ such that
		\[\psi(T)=\lambda(T) AT^\prime A^{-1},\ \ \ T\in  \mathcal{N}_1(\mathcal{X}). \]
		If $\mathcal{X}$ is of finite dimension $n\ge 3$, we identify operators with n-by-n matrices. Then  there exists a nonsingular matrix $A$, a field automorphism $\tau$ of $\F$ and a map $\lambda : \mathcal{N}_1(\mathcal{X})\to \F^\ast$ such that $\psi$ is either of the form
		\[ \psi([t_{ij}]) = \lambda(T) A[\tau(t_{ij})]A^{-1},\ \ \ [t_{ij}]\in \mathcal{N}_1(\mathcal{X}),\]
		or, $\phi$ is of the form
		\[ \psi([t_{ij}]) = \lambda(T) A[\tau(t_{ij})]^{\mathrm{tr}}A^{-1},\ \ \ [t_{ij}]\in \mathcal{N}_1(\mathcal{X}).\]
	\end{proposition}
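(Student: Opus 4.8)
The plan is to recast the three hypotheses on $\psi$ as a statement about an incidence geometry and then to invoke the fundamental theorem of projective geometry. First I would identify each rank-one nilpotent $x\otimes f$ (so necessarily $f(x)=0$) with the incident pair $\big(\langle x\rangle,\ker f\big)$, a point of $\mathbb{P}(\mathcal{X})$ lying in a closed hyperplane of $\mathcal{X}$; write $\mathcal{F}$ for the set of all such pairs, whose members I call flags. Two rank-one nilpotents are linearly dependent exactly when they determine the same element of $\mathcal{F}$, so, since $\psi$ is surjective and preserves linear dependence in both directions, it descends to a bijection $\Psi$ of $\mathcal{F}$ onto itself, with $\psi(x\otimes f)=\lambda(x\otimes f)\,K$ for any fixed rank-one nilpotent $K$ whose flag is $\Psi\big(\langle x\rangle,\ker f\big)$ and a suitable scalar $\lambda(x\otimes f)\in\F^\ast$. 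By Proposition~\ref{prop:sim}, for linearly independent $M,N\in\mathcal{N}_1(\mathcal{X})$ the relation $M\sim N$ holds precisely when $M$ and $N$ share their range or share their kernel, that is, when their flags are \emph{adjacent} in $\mathcal{F}$; for linearly dependent $M,N$ the flags coincide. Hence the condition $T\sim S\Longleftrightarrow\psi(T)\sim\psi(S)$ says exactly that $\Psi$ is an automorphism of the adjacency graph on $\mathcal{F}$.

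Next I would exploit the structure of this graph. Its maximal cliques form two families, the \emph{point pencils} $\{(\langle x\rangle,H)\in\mathcal{F}:H\supseteq\langle x\rangle\}$ and the \emph{hyperplane pencils} $\{(\langle y\rangle,H)\in\mathcal{F}:y\in H\}$, and through every flag there passes exactly one clique of each family; thus $\Psi$ either preserves the partition of maximal cliques into these two families or interchanges them, and a connectedness argument of Chow type forces the alternative to be global. If $\Psi$ preserves the partition, it induces a bijection of the points of $\mathbb{P}(\mathcal{X})$ preserving collinearity in both directions; since $\dim\mathcal{X}\ge3$, the fundamental theorem of projective geometry produces a field automorphism $\tau$ of $\F$ and a $\tau$-semilinear bijection $A:\mathcal{X}\to\mathcal{X}$ inducing it, and compatibility with incidence forces the hyperplane part of $\Psi$ to be $H\mapsto A(H)$. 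Consequently $\Psi\big(\langle x\rangle,\ker f\big)=\big(\langle Ax\rangle,A(\ker f)\big)$, which is the flag of the genuine rank-one nilpotent $A(x\otimes f)A^{-1}$ (the operator $y\mapsto\tau\big(f(A^{-1}y)\big)Ax$); absorbing scalars into $\lambda$ then gives $\psi(T)=\lambda(T)\,ATA^{-1}$. If instead $\Psi$ interchanges the two families, it is a correlation of $\mathbb{P}(\mathcal{X})$, the same theorem applied to $\mathbb{P}(\mathcal{X}^\ast)$ yields a semilinear bijection $A:\mathcal{X}^\ast\to\mathcal{X}$, and the analogous unwinding gives $\psi(T)=\lambda(T)\,AT^\prime A^{-1}$. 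When $\dim\mathcal{X}=n\ge3$, reading these off in a fixed basis with $\tau$ applied entrywise gives precisely \eqref{eq:1000} and \eqref{eq:2000}.

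Finally, in the infinite-dimensional case I would upgrade ``semilinear'' to ``bounded linear or conjugate-linear''. Since $\psi$ is valued in $\mathcal{B(X)}\supseteq\mathcal{N}_1(\mathcal{X})$, the operator $A(x\otimes f)A^{-1}$, whose kernel is $A(\ker f)$, must be bounded, hence have closed kernel, for every $f$; as $\dim\mathcal{X}\ge3$ this forces $A^{-1}$ to carry closed hyperplanes to closed hyperplanes, and the same reasoning applied to $\psi^{-1}$, which satisfies the same hypotheses, shows $A$ does too. By the standard facts about semilinear maps of Banach spaces used in \cite{2} and \cite{5}, $A$ is then bounded with bounded inverse and $\tau$ is continuous, hence the identity when $\F=\R$ and the identity or complex conjugation when $\F=\mathbb{C}$; absorbing $\tau$ into $A$ makes $A$ bounded linear or conjugate-linear. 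I expect the geometric core --- showing that $\Psi$ is induced by a collineation or a correlation, where the hypothesis $\dim\mathcal{X}\ge3$ is indispensable --- to be the main obstacle: one must either cite the appropriate fundamental theorem for flag geometries or reprove it through the clique-and-connectedness analysis above together with a careful recovery of the projective structure from the graph. The remaining reductions and the boundedness bookkeeping are routine by comparison.
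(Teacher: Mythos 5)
Your proposal follows essentially the same route as the paper's proof: your point pencils and hyperplane pencils are exactly the paper's sets $L_x$ and $R_f$, your global preserve-or-swap dichotomy is what the paper establishes in Steps 1--3, the induced collinearity-preserving map on $\mathbb{P}(\mathcal{X})$ (or the correlation to $\mathbb{P}(\mathcal{X}^\ast)$) is Step 4, and the appeal to the fundamental theorem of projective geometry together with the boundedness/continuity upgrade is precisely what the paper delegates to Claim 4 of Lemma 2.2 in \cite{6}. So the argument is correct and matches the paper's proof in substance, differing only in the graph-theoretic packaging.
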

	\begin{remark}
		The proof of the above proposition is very similar to the proof of  Lemma 2.2 in \cite{6}, however, it is not directly applicable in our case, so we present the proof.
	\end{remark}
	\begin{proof}
		For any nonzero $x\in\mathcal{X}$ and $f\in \mathcal{X}^\ast$ define $L_x=\{x\otimes g; g(x)=0\}$ and $R_f=\{y\otimes f; f(y)=0\}$.
		
		{\sc Step 1.} \textit{For every nonzero $x\in\mathcal{X}$ there exists a nonzero vector $y$ such that $\psi(L_x)=L_y$ or there exists an nonzero functional $g$ such that $\psi(L_x)=R_g$. Moreover,  for every nonzero $f\in\mathcal{X}^\ast$ there exists a nonzero vector $y$ such that $\psi(R_f)=L_y$ or there exists a nonzero functional $g$ such that $\psi(R_f)=R_g$.}
		
		Let us fix a rank-one nilpotent $x\otimes f\in L_x$. Then $\psi(x\otimes f)=y\otimes g$ for some non-zero vector $y$ and a functional $g\ne 0$ such that $g(y)=0$. Let  $f_1$ be any functional linearly independent of $f$ and satisfying $f_1(x)=0$.  Since $x\otimes f\sim x\otimes f_1$, we conclude that $\psi(x\otimes f)\sim \psi(x\otimes f_1)$. Then either  $\psi(x\otimes f_1)\in L_y$ or, $\psi(x\otimes f_1)\in R_g$.  If incidentally $\psi(x\otimes f_1)\in L_y\cap  R_g$, then $\psi(x\otimes f_1)=\gamma y\otimes g$ and some nonzero scalar $\gamma$, which contradicts the linear independency of $\psi(x\otimes f)$ and $\psi(x\otimes f_1)$.  For any $f_2\in \{x\}^\circ$ we have $x\otimes f_2 \sim x\otimes f_1\sim x\otimes f_2$. Assuming  $\psi(x\otimes f_1)=y\otimes g_1$ and $\psi(x\otimes f_1)\notin R_g$ provides that $g$ and $g_1$ are linearly independent, hence $\psi(x\otimes f_2)\in L_y$. Similarly, supposing $\psi(x\otimes f_1)=y_1\otimes g$ gives that $y$ and $y_1$ are linearly independent and so, $\psi(x\otimes f_2)\in R_g$. We have thus confirmed that $\psi(L_x)\subseteq L_y$ or, $\psi(L_x)\subseteq R_g$. Next we show that  $\phi(L_x)=L_y$ when $\phi(L_x)\subseteq L_y$. Take any $y\otimes h\in  L_y$. Since $\psi$ is surjective, $y\otimes h= \psi(u\otimes v)$, for some nonzero $u\in\mathcal{X}$, $v\in \mathcal{X}^\ast$. As $y\otimes h\sim y\otimes g=\psi(x\otimes f)$ and $y\otimes h\sim y\otimes g_1=\psi(x\otimes f_1)$, it follows that $u$ and $x$ are linearly dependent, giving that $u\otimes v \in L_x$. In a very similar way, one can show that in fact $\psi(L_x)= R_g$ when $\psi(L_x)\subseteq R_g$.
		
		{\sc Step 2.} \textit{For every nonzero $x\in\mathcal{X}$ we have $\psi(L_x)=L_{y_x}$ for some nonzero vector $y_x$; or, for every nonzero $x\in\mathcal{X}$ we have $\psi(L_x)=R_{g_x}$ for some functional $g_x$. Similarly, for every nonzero $f\in\mathcal{X}^\ast$ we have $\psi(R_f)=L_{y_f}$ for some $y_f$; or, for every nonzero $f\in\mathcal{X}^\ast$ we have $\psi(R_f)=R_{g_f}$ for some  functional $g_f$.}
		
		Assume erroneously that there are linearly independent vectors $x_1$, $x_2$ such that $\psi(L_{x_1})=L_{y_1}$ for some nonzero vector $y_1$ and  $\psi(L_{x_2})=R_{g_2}$ for some functional $g_2\neq 0$.  Take a functional  $f\neq 0$ such that $f(x_1)=f(x_2)=0$ and apply that $\psi(x_1\otimes f)=y_1\otimes g_1$ and $\psi(x_2\otimes f)=y_2\otimes g_2$. Since $x_1\otimes f\sim x_2\otimes f$, we have $y_1\otimes g_1\sim y_2\otimes g_2$, giving that either $y_1,y_2$ are linearly dependent or, $g_1,g_2$ are linearly dependent. If  $y_1,y_2$ are linearly dependent, we have $\psi(x_2\otimes f)=y_2\otimes g_2\in L_{y_1}\cap R_{g_2}=\psi(L_{x_1})\cap \psi(L_{x_2})$. Then $y_2\otimes g_2=\psi(x_1\otimes h_1) =\psi(x_2\otimes h_2)$. It follows that $x_1\otimes h_1$ and $x_2\otimes h_2$ are linearly dependent, forcing that $x_1$ and $x_2$ are linearly dependent, a contradiction. Similarly, we treat the alternative options.
		
		{\sc Step 3.} \textit{For every nonzero $x\in \mathcal{X}$ and every nonzero  $f\in \mathcal{X}^\ast$ there either exist $y_x\in\mathcal{X}$ and  $g_f\in \mathcal{X}^\ast$ such that
			\begin{equation}\label{eq:1}
				\psi(L_x)=L_{y_x} \ \ \text{and} \ \  \psi(R_f)=R_{g_f}
			\end{equation}
			or, there exist $y_f\in\mathcal{X}$ and  $g_x\in \mathcal{X}^\ast$ such that
			\begin{equation}\label{eq:2}
				\psi(L_x)=R_{g_x} \ \ \text{and} \ \  \psi(R_f)=L_{y_f} \ \ \ (x\in \mathcal{X},\   f\in \mathcal{X}^\ast).
			\end{equation}
		}
		
		Assume $ \psi(L_x)=L_{y_x}$ for all $x\neq0$ and  $\psi(R_f)=L_{g_f}$ for every nonzero functional $f$. Choose linearly independent vectors $x_1$ and $x_2$ and a nonzero functional $f$ satisfying $f(x_1)=f(x_2)=0$. Then $\psi(x_1\otimes f)=y_{x_1}\otimes g_1=y_f\otimes h_1$ and $\psi(x_2\otimes f)=y_{x_2}\otimes g_2=y_f\otimes h_2$. It follows that $y_{x_1}$ and  $y_{x_2}$ are linearly independent, thus $\psi(L_{x_1})=L_{y_{x_1}}=L_{y_{x_2}}=\psi(L_{x_2})$. For every $f\in\{x_1\}^\circ$ there exists a $g\in\{x_2\}^\circ$ such that $\psi(x_1\otimes f)=\psi(x_2\otimes g)$. It follows that $x_1\otimes f$ and $x_2\otimes g$ are linearly dependent, a contradiction with linear independence of $x_1$, $x_2$.

		Assuming that the $\psi$ satisfies \eqref{eq:1}, we introduce the map $\varphi:  \mathbb{P}(\mathcal{X})\to \mathbb{P}(\mathcal{X})$ by $\varphi([x])=[y_x]$. If $\psi$ has  property \eqref{eq:2}, then we set up the map $\varphi: \mathbb{P}(\mathcal{X})\to \mathbb{P}(\mathcal{X}^\ast)$ by $\varphi([x])=[g_x]$.
		
		{\sc Step 4.} \textit{In each of the above cases $\varphi$ is bijective and, $[x]\subseteq[u]+[v]$ if and only if $\varphi([x])\subseteq\varphi([u])+\varphi([v])$.}
		
		Let $\psi$ satisfy \eqref{eq:1}. We check that $\varphi $ is bijective. Assume first that $\left[y_x\right]=\left[y_z\right]$. It follows that $\psi(L_x)=\psi(L_z)$ thus for every $f\in \left\{x\right\}^\circ $ there exists a $g\in \left\{z\right\}^\circ $ such that $\psi(x\otimes f)=\psi(z\otimes f)$. Then $x\otimes f$ and $z\otimes f$ are linearly dependent yielding that $x$ and $z$ are linearly dependent. Therefore, $\left[x\right]=\left[z\right]$ and in turn $\psi$ injective. The surjectivity of $\varphi$ follows easily from the surjectivity of $\psi$.
		
		Let $\psi(L_u)=L_{y_u}$, $\psi(L_v)=L_{y_v}$ and $\psi(L_x)=L_{y_x}$ and, suppose $[x]\subseteq[u]+[v]$. Take any $g\in   \left\{ y_u, y_v\right\}^\circ$. We will show that  $g(y_x)=0$ which confirms $[y_x]\subseteq[y_u]+[y_v]$. Let (possibly not uniquely defined) $M,N$ be rank-one nilpotents such that $\psi(N)=y_u\otimes g$ and $\psi(M)=y_v\otimes g$. Since $y_u\otimes g$, $y_v\otimes g\in R_g=\psi(R_f)$ for some $f\in \left\{u,v\right\}^\circ$, then by our assumption $f\in \left\{x\right\}^\circ$, $x\otimes f\in L_x \cap R_f$ provides that $\psi(x\otimes f)\in L_{y_x} \cap R_g$. Therefore, $g(y_x)=0$. Conversely, assume $[y_x]\subseteq [y_u]+[y_v]$ and take an $f\in \left\{u,v\right\}^\circ$. It suffices to see that $f(x)=0$. Since $\psi(u\otimes f)$, $\psi(v\otimes f)\in L_{y_u}\cap R_{g_f}$ and $L_{y_v}\cap R_{g_f}$, respectively, for some functional $g_f$, we must have $g_f(y_x)=0$, so, $y_x\otimes g_f\in \psi(L_x)\cap \psi(R_f)$. Let $A$ be a rank-one nilpotent such that $\psi(A)=y_x\otimes g_f$. $A$ is defined up to a scalar factor, because of linear dependency preserving property, so $A$ must be a member of $L_x \cap R_f$ and as it is nilpotent, we have $f(x)=0$.
		
		If  $\psi$ satisfies \eqref{eq:2}, the arguments are very similar and we omit this part of the proof.

		{\sc Step 5.}\textit{ $\psi$ is of one of the forms in the claim.}
		
		The verification of this step is identical to the proof of Claim 4 in the proof of Lemma 2.2 in \cite{6} so we omit it.
	\end{proof}

	\section{Proof of the main result}
	
	Applying Lemma \ref{lem:0}, Lemma \ref{rk1} and Lemma \ref{lem:p1} we immediately obtain that $\phi(A)=0$ if and only if $A=0$, $ \phi$ preserves rank-one operators in both directions and, $\phi$ preserves nonzero scalar multiples of rank-one idempotents and consequently, $\phi$ preserves rank-one nilpotent operators in both directions.
	
	Note that for every rank-at-most-one operator $N\in \bx$ the following observation holds:  $N \in \mathcal{N}_1(\mathcal{X})$ if and only if $\alpha(N)=2$ if and only if $\delta(N)=2$. 
	Having this in mind, by Corollary \ref{cor1}, the restriction of $\phi$ to $\mathcal{N}_1(\mathcal{X} )$, $\psi:\mathcal{N}_1(\mathcal{X} )\to \mathcal{N}_1(\mathcal{X} )$ preserves linear dependence in both directions and, by Proposition \ref{prop:sim}, the map $\psi$ preserves the relation $\sim$ in both directions.
	
	Without losing any generality, by  Proposition \ref{prop:DHB}, we may and we do assume that $\phi(N)=N$ for every rank-one nilpotent $N$. 
	Taking into account Lemma \ref{lem:N1} and the above observation, with an arbitrary $A\in \bx$ and  $B:=\phi(A)$, we have that $NAN\in  \mathcal{N}_1(\mathcal{X} )$ if and only if $\alpha(NAN)=2$ if and only if  $\delta(NAN)=2$ if and only if $NBN\in  \mathcal{N}_1(\mathcal{X} )$. Hence, we get that $B=u_A I +v_A A$ for some  scalars $u_A$, $v_A \in \F$, not necessary unique. Also, 
	by Lemma \ref{lem:N1}, $\phi$ maps scalar operators to scalar operators and vice versa. Then $v_A \neq 0$ whenever $A$ is not scalar operator and by Lemma \ref{rk1},  
	$u_A=0$ for every rank-one operator $A$. 
	
	$A$ is injective (resp. surjective) if and only if $B$ is injective (resp. surjective) subject to preserving ascent (resp. descent). This is due to preserving scalar operators and $\alpha(IAI)=\alpha(A)$ and $\delta(IAI)=\delta(A)$ for every $A$.
	
	Our next task is to show that $u_A=0$ for every $A$ (except for scalar operators where it is not uniquely defined). Without losing any generality we assume that $B= wI +A$ and we will see that $w=0$ for any non-scalar operator $A$ which is of rank at least two.
	
	Suppose that there exists an $x$ such that $x$, $Ax$ and $A^2x$ are linearly independent. We can take a functional $f$ such that $f(x)=0$, $f(Ax)=1$ and $f(A^2x)=0$. Clearly, $Ax\neq0$ and $A^\prime f\neq 0$, so $Ax\otimes fA\in \mathcal{N}_1(\mathcal{X})$ and hence,  $Bx\otimes fB\in \mathcal{N}_1(\mathcal{X})$. As $B^2=w^2 I +  2wAx +A^2x$, one obtains  $0=f(B^2 x)=2w$. In particular, from this argument, we can deduce that $\phi(T) = v_T T$, $v_T\in \F^\ast$, for every algebraic operator $T$ of degree greater than or equal $3$. 
	
	Next, let  $A$ be an algebraic operator of degree $2$ and of rank greater than one.    Now Lemma \ref{lem:tech} leads to contradiction unless $w=0$. Hereby the proof of Theorem \ref{th:1} has been closed.

	\bigskip
	\textbf{Aknowledgement.} The second author is supported by the Slovenian Research Agency
	(core research program P1-0306). The second author also acknowledges COST (European Cooperation in Science and Technology) actions
	CA15140 ( Improving Applicability of Nature-Inspired Optimisation by Joining Theory and Practice (ImAppNIO)) and IC1406 (High-Performance Modelling and Simulation for Big Data Applications (cHiPSet)) and Tomas Bata University in Zl\'in for accessing Wolfram Mathematica.

\end{document}